\title[weighted sum formula for FAMZV]{Weighted sum formulas for finite alternating multiple zeta values with some parameters
}
\author{Takumi Anzawa}
\email{m20001s@math.nagoya-u.ac.jp}
\address{graduate school of mathematics, nagoya university, furo-cho, chikusa-ku, nagoya, 464-8602, japan}
\date{2022/01/06} 
\newtheorem{df}{Definition}[section]
\newtheorem{lem}[df]{Lemma}
\newtheorem{cor}[df]{Corollary}
{\theoremstyle{remark}
\newtheorem{rmk}[df]{Remark}}
\newtheorem{thm}[df]{Theorem}
\newtheorem*{mt}{Main\;\;Theorem}
\DeclareMathOperator{\dep}{dp}
\DeclareMathOperator{\sgn}{sgn}
\DeclareMathOperator{\zf}{\zeta_{\mathscr{A}}}
\DeclareMathOperator{\wt}{wt}
\DeclareMathOperator{\Li}{Li}
\numberwithin{equation}{section}
\begin{document}
\maketitle
\begin{abstract}
We prove a sum formula with 4 parameters among finite alternating multiple zeta values which can be regarded as an alternating version of result of Kamano on finite multiple zeta values.
\end{abstract}
\tableofcontents
\section{Introduction}
The multiple zeta value (MZV, in short) is the real number defined by
\[
\zeta(\mathbf{k}):=\zeta(k_1,\ldots,k_r):=\sum_{0<m_1<\cdots<m_r}\frac{1}{m_1^{k_1}\cdots m_r^{k_r}}\in\mathbb{R}.
\]
for $r\in\mathbb{N}$ and an index $\mathbf{k}:=(k_1,\ldots,k_r)\in\mathbb{N}^r$ with $k_r\ge 2$.
The condition  $k_r\ge 2$ ensures the convergence of the series.
This number has deep properties and has appeared in recent years
in connection with a surprising diversity of topics, including knot invariants (cf. \cite{LM}), periods of mixed Tate motives (cf. \cite{T}), and calculations of integrals
associated with Feynman diagrams in perturbative quantum field theory (cf. \cite{B2}).

Kaneko and Zagier introduced a finite analogue of multiple zeta value which belongs to the following $\mathbb{Q}$-algebra 
\[
\mathscr{A}:=\left(\prod_{p:\text{prime}}\mathbb{Z}/p\mathbb{Z}\right)\Bigm/\left(\bigoplus_{p :\text{prime}} \mathbb{Z}/p\mathbb{Z}\right).
\]  
For each index $(k_1,\ldots,k_r)\in\mathbb{N}^r$, the finite multiple zeta values (FMZV, in short) is defined by 
\[
\zf(k_1,\ldots,k_r):=\left(\sum_{0<m_1<\cdots<m_r<p} \frac{1}{m_1^{k_1}\cdots m_r^{k_r}}\mathrm{\mod}\;p\right)_p\in\mathscr{A}.
\]
Several relations among FMZV's are found in \cite{IKT}, \cite{SW} etc.
We define $\overline{\mathbb{N}}:=-\mathbb{N}$ and $\overline{n}$ as element of $\overline{\mathbb{N}}$ denotes $-n$ for $n\in\mathbb{N}$ and we define $\overline{\overline{n}}=n$.
Let $\mathbb{D}$ be the disjoint union $\mathbb{N}\cup\overline{\mathbb{N}}$ that is, $\mathbb{D}=\mathbb{Z}\setminus\{0\}$. The signature $\sgn$ and the absolute value on $\mathbb{D}$ are defined by
\[
\sgn (n):=
\left\{
\begin{array}{cc}
1&(n\in\mathbb{N})\\
-1&(n\in\overline{\mathbb{N}})
\end{array},\;\;\;
\right. 
|n|:=
\left\{
\begin{array}{cc}
n&(n\in\mathbb{N})\\
\overline{n}&(n\in\overline{\mathbb{N}})
\end{array}
\right. .
\]
We call an element of $\mathbb{D}^r$ also an index. 
For each index $\bm{\alpha}=(\alpha_i)_{1\le i\le r}\in\mathbb{D}^r$,
its weight, denoted by $\wt(\bm{\alpha})$ (or $|\bm{\bm{\alpha}}|$ for simplicity), means $|\alpha_1|+\cdots+|\alpha_r|$ and its depth, denoted by $\dep (\bm{\alpha})$, means $r$.
For such an index $\bm{\alpha}$, the corresponding finite alternating multiple zeta value (FAMZV, in short) is defined by
\[
\zf(\alpha_1,\ldots,\alpha_r):=\left(\sum_{0<m_1<\cdots<m_r<p} \frac{\sgn(\alpha_1)^{m_1}\cdots \sgn(\alpha_r)^{m_r}}{m_1^{|\alpha_1|}\cdots m_r^{|\alpha_r|}}\mathrm{\mod}\;p\right)\in\mathscr{A}
\]
(cf. \cite{zhao}). 
 The following formula is proved in \cite{Ka}.

\begin{thm}[\cite{Ka}, Main Theorem]\label{Kama}
Let $\lambda_1$, $\lambda_2$, $\mu_1$ and $\mu_2$ be indeterminates. For any non-negative integers $q_1$ and $q_2$, the following holds in $\mathscr{A}[\lambda_1,\lambda_2,\mu_1,\mu_2]$.
\begin{align}
&\sum_{\substack{i_1+i_2=q_1\\ j_1+j_2=q_2}}\left\{(-1)^{i_2+j_2}\lambda_1^{i_1}\lambda_2^{i_2}\mu_1^{j_1}\mu_2^{j_2}+(\lambda_1^{i_1}\mu_1^{j_1}+\lambda_2^{i_2}\mu_2^{j_2})(\lambda_1+\lambda_2)^{i_2}(\mu_1+\mu_2)^{j_2}\right\}\sum_{\substack{\bm{\alpha}\in S_{i_1,j_1}\\ \bm{\beta}\in S_{i_2,j_2}}}\zf(\bm{\alpha},\bm{\beta})=(0)_p . \label{kamano}
\end{align}
where $S_{i,j}$ is defined by $S_{i,j}:=\{\bm{\alpha}\in\mathbb{N}^{i+1}\mid |\bm{\alpha}|=i+j+1\}$ for $i$, $j\in\mathbb{N}$. 
\end{thm}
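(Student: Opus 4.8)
The plan is to fix a prime $p$, encode the inner index-sums by weight-generating functions, and reduce everything modulo $p$ via the elementary congruence $\prod_{m=1}^{p-1}(X-m)\equiv X^{p-1}-1\pmod p$. For fixed $0<m_1<\cdots<m_r<p$, the sum of $m_1^{-k_1}\cdots m_r^{-k_r}$ over all positive indices of a given total weight is the relevant coefficient of $\prod_{i=1}^r\frac{t}{m_i-t}$; summing over all depths and all increasing tuples, and setting the depth-variable to $\lambda/\mu$ and the weight-variable to $\mu$, collapses this to the single product $\prod_{m=1}^{p-1}\frac{m-\mu+\lambda}{m-\mu}$, which modulo $p$ equals $\frac{(\mu-\lambda)^{p-1}-1}{\mu^{p-1}-1}$. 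As a power series in $(\lambda,\mu)$ its non-constant terms all have total degree at least $p-1$, so every coefficient of bounded total degree beyond the constant vanishes once $p$ is large; in particular this already yields the single-block vanishing $\sum_{\bm{\alpha}\in S_{i,j}}\zf(\bm{\alpha})=(0)_p$ in $\mathscr{A}$, which I will use to absorb the degenerate boundary terms.

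Next I would introduce the four-variable generating function $\mathcal{B}=\sum \lambda_1^{i_1}\mu_1^{j_1}\lambda_2^{i_2}\mu_2^{j_2}\sum_{\bm{\alpha}\in S_{i_1,j_1},\,\bm{\beta}\in S_{i_2,j_2}}\zf(\bm{\alpha},\bm{\beta})$ for the concatenation, and cut the range $0<m_1<\cdots<p$ at the junction index $a=m_{\dep(\bm{\alpha})}$. This writes $\lambda_1\lambda_2\mathcal{B}$ as a sum over $a$ of a lower partial product $\prod_{m<a}\frac{m-(\mu_1-\lambda_1)}{m-\mu_1}$, a junction factor at $m=a$, and an upper partial product $\prod_{m>a}\frac{m-(\mu_2-\lambda_2)}{m-\mu_2}$, minus the empty-block correction. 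Multiplying by the monomials of the three summands in the braces of \eqref{kamano} and summing over all bidegrees then exhibits the left-hand side of \eqref{kamano} as a fixed linear combination of such cut-sums, with the block-$2$ parameters shifted to $(-\lambda_2,-\mu_2)$ in the first summand and to $(\lambda_1+\lambda_2,\mu_1+\mu_2)$ in the other two.

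The decisive step is the discrete product rule
\[
\prod_{m=1}^{p-1}f(m)-\prod_{m=1}^{p-1}h(m)=\sum_{a=1}^{p-1}\Big(\prod_{m<a}f(m)\Big)\big(f(a)-h(a)\big)\Big(\prod_{m>a}h(m)\Big).
\]
The purpose of the three summands is exactly to arrange that the combined junction factors become such differences $f(a)-h(a)$, so that each cut-sum telescopes into a difference of \emph{full} products $\prod_{m=1}^{p-1}$. Here the reversal relation $\zf(\alpha_1,\dots,\alpha_r)=(-1)^{|\bm{\alpha}|}\zf(\alpha_r,\dots,\alpha_1)$, i.e. the substitution $m\mapsto p-m$, is what converts the upper product $\prod_{m>a}\frac{m-(\mu_2-\lambda_2)}{m-\mu_2}$ into a lower product with $(\lambda_2,\mu_2)\mapsto(-\lambda_2,-\mu_2)$; this is precisely the substitution carried by the first summand and explains its sign $(-1)^{i_2+j_2}$, while the summed parameters $\lambda_1+\lambda_2$ and $\mu_1+\mu_2$ in the other two force the lower and the reversed upper products to share a common factor $f$, so that the telescoping closes. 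Once the whole expression is a combination of full products $\prod_{m=1}^{p-1}\frac{m-\mu+\lambda}{m-\mu}\equiv\frac{(\mu-\lambda)^{p-1}-1}{\mu^{p-1}-1}$, each of which is constant up to terms of degree at least $p-1$, the bounded-degree part cancels; reading off the bidegree $(q_1,q_2)$, whose total weight $q_1+q_2+2$ is $<p-1$ for almost all $p$, gives $(0)_p$ in $\mathscr{A}$.

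The main obstacle is the bookkeeping of this last step: verifying that the three coefficients in the braces of \eqref{kamano} are exactly those that recombine the cut-sums into telescoping differences, i.e. that after accounting for the junction factor $\frac{1}{a-\mu_1}$, the empty-block corrections (disposed of by the single-block vanishing above), and the reversal-induced sign and parameter shifts, the middle factors assemble into $f(a)-h(a)$ against matching partial products. The remaining ingredients — the mod-$p$ evaluation of the full products and the degree count — I expect to be routine, with this combinatorial matching of the parameter combinations being the one genuinely delicate verification.
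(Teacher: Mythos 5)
The paper itself does not prove Theorem \ref{Kama}: it is quoted from \cite{Ka}, and the only argument of this kind in the paper is the proof of the Main Theorem (the alternating analogue), which runs through multiple polylogarithms: a double integral $N(z)$ is evaluated once as a product of two one-dimensional integrals, yielding the first brace summand via $\mathfrak{L}_p$ and the reversal Lemma \ref{Lem2}, and once after splitting $\{0<s,t<z\}$ into $\{s<t\}\sqcup\{t<s\}$ and applying the chain rule (\ref{ch}), yielding the other two summands. Your route is genuinely different in its tools --- no integrals and no polylogarithms, only truncated sums modulo $p$, the congruence $\prod_{m=1}^{p-1}(X-m)\equiv X^{p-1}-1$, and partial products --- though it is structurally parallel: your cut at the junction $a$ is the discrete counterpart of the domain decomposition, and your substitution $m\mapsto p-m$ is the counterpart of Lemma \ref{Lem2}. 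The set-up (the generating functions $\prod_{m<a}\frac{m-\mu+\lambda}{m-\mu}$, the single-block vanishing, the reading of the brace summands as parameter substitutions) is sound and would, if completed, give a purely algebraic proof.

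There are, however, two genuine gaps. First, the step you yourself call decisive is asserted rather than carried out, and it is not routine: after your substitutions the junction factor of, say, the second summand is $\frac{\lambda_1}{a-\mu_1}=A(a)-1$ with $A(m)=\frac{m-\mu_1+\lambda_1}{m-\mu_1}$, not $A(a)-B(a)$ with $B(m)=\frac{m-\mu_1-\mu_2+\lambda_1+\lambda_2}{m-\mu_1-\mu_2}$ the factor of the adjacent upper product; one application of the discrete product rule therefore leaves a remainder $\sum_a\bigl(\prod_{m<a}A(m)\bigr)\frac{\lambda_1+\lambda_2}{a-\mu_1-\mu_2}\bigl(\prod_{m>a}B(m)\bigr)$ which is not a difference of full products and must be shown to cancel against the analogous remainders of the other two summands. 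That cancellation is the actual content of the theorem and is missing from the proposal. Second, the third brace term as printed, $\lambda_2^{i_2}\mu_2^{j_2}(\lambda_1+\lambda_2)^{i_2}(\mu_1+\mu_2)^{j_2}$, carries no dependence on $(i_1,j_1)$ and hence is not a parameter substitution in your generating function at all; your description silently replaces it by $\lambda_2^{i_1}\mu_2^{j_1}$, which is what the Main Theorem specializes to and what \cite{Ka} actually states. The distinction matters: at $(q_1,q_2)=(0,1)$ the printed version reads $(2\mu_1+1)\zf(2,1)+(\mu_1+\mu_1\mu_2+\mu_2^2)\zf(1,2)=(0)_p$, whose $\mu_2^2$-coefficient would force $\zf(1,2)=(0)_p$ --- not something \cite{Ka} claims --- whereas the corrected version collapses to $(2\mu_1+\mu_2)(\zf(2,1)+\zf(1,2))=(0)_p$, which holds by the harmonic product. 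So the statement must first be corrected before your telescoping can close, and the closing itself still has to be exhibited.
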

The main result of this paper is an alternating analog of Theorem \ref{Kama}:
\begin{mt}\label{an}
Let $\lambda_1$, $\lambda_2$, $\mu_1$ and $\mu_2$ be indeterminates. For any non-negative integers $n$, m,  the following holds in $\mathscr{A}[\lambda_1,\lambda_2,\mu_1,\mu_2]$
\begin{align}
&\sum_{\substack{r_1+r_2=n\\ k_1+k_2=m}}(-1)^{r_2+k_2}\lambda_1^{r_1}\lambda_2^{r_2}\mu_1^{k_1}\mu_2^{k_2} 
\sum_{\substack{\bm{\alpha}\in S_{r_1,k_1}^{\sgn}\\ \bm{\beta}\in S_{r_2,k_2}^{\sgn}}}\sgn(\bm{\beta}) \zf(\bm{\alpha},\bm{\beta}) \label{AA}\\
&+\sum_{\substack{r_1+r_2=n \\ k_1+k_2=m}}(\lambda_1^{r_1}\mu_1^{k_1}+\lambda_2^{r_1}\mu_2^{k_1})(\lambda_1+\lambda_2)^{r_2}(\mu_1+\mu_2)^{k_2} \sum_{\substack{\bm{\alpha}\in S_{r_1,k_1}^{\sgn}\\ \bm{\beta}\in S_{r_2,k_2}^{\sgn}}}\zf(\bm{\alpha},\bm{\beta})=(0)_p. \notag
\end{align}
where $S_{i,j}^{\sgn}$ is defined by  $S_{i,j}^{\sgn}:=\{\bm{\bm{\alpha}}\in\mathbb{D}^{i+1}\mid |\bm{\bm{\alpha}}|=i+j+1\}$ for $i$, $j$, $\in\mathbb{Z}_{\ge 0}$.
\end{mt}

\section{Proof of main theorem}
For each index $\bm{\alpha}=(\alpha_1,\ldots,\alpha_r)\in\mathbb{D}^r$, the multiple polylogarithm is the complex function defined by the following series
\[
\Li(\bm{\alpha};z):=\Li(\bm{\alpha}):=\sum_{0<m_1<\cdots<m_r} \frac{\sgn(\alpha_1)^{m_1}\cdots \sgn(\alpha_r)^{m_r}}{m_1^{\alpha_1}\cdots m_r^{\alpha_r}}z^{m_r}.
\]
The radius of convergence is equal to $1$ for each $\bm{\alpha}\in \mathbb{D}^r$.
We note that the above function has an analytic continuation to a domain, bigger than the open unit disk, by the integral expression called the iterated integral expression. 

Let $\iota_i:=\sgn(\alpha_i)$ and $\eta_i:=\prod_{j=1}^i\iota_{r+1-j}$.
For $0<z<1$, the iterated integral expression of the multiple polylogarithm is equal to
\[
\Li(\bm{\alpha},z)=\int \cdots \int_{0<t_1<\cdots<t_{|\bm{\alpha}|}<z}
\frac{dt_1}{\eta_r-t_1}\underbrace{\frac{dt_2}{t_2}\cdots \frac{dt_{|\alpha_1|}}{t_{|\alpha_1|}}}_{|\alpha_1|-1}\cdots\frac{dt_{|\bm{\alpha}|-|\alpha_r|+1}}{\eta_1-t_{|\bm{\alpha}|-|\alpha_r|+1}}\underbrace{\frac{dt_{|\bm{\alpha}|-|\alpha_r|+2}}{t_{|\bm{\alpha}|-|\alpha_r|+2}}\cdots \frac{dt_{|\bm{\alpha}|}}{t_{|\bm{\alpha}|}}}_{|\alpha_r|-1}.
\]
For any primes $p$,  we define $\mathbb{Q}$-linear operators $\mathfrak{L}_p:\mathbb{Q}[[z]]\rightarrow \mathbb{Q}$ by 
\[
\mathfrak{L}_p\left(\sum_{n=0}^\infty a_n z^n\right)=\sum_{n=0}^{p-1} a_n.
\]
The function $\Li(\bm{\alpha};z)$ regarded as a formal power series over $\mathbb{Q}$
 satisfies $(\mathfrak{L}_p (\Li(\bm{\alpha})) \mathrm{\mod}\;p)_p=\zf(\bm{\alpha})$.

For any $t_1$, $t_2\in(0,1)$, we define  
\begin{align*}
L_1(t_1,t_2):=&\int_{t_1}^{t_2}\frac{dt}{1-t}=\log \frac{1-t_1}{1-t_2}\\
L_{-1}(t_1,t_2):=&\int_{t_1}^{t_2}\frac{-dt}{1+t}=\log \frac{1+t_1}{1+t_2}\\
L_0(t_1,t_2):=&\int_{t_1}^{t_2}\frac{dt}{t}=\log \frac{t_2}{t_1}.
\end{align*}
The `chain rule' holds for these functions.
\begin{align}
L_j(t_1,t_2)=L_j(t_1,u)+L_j(u,t_2)\;\;(j\in\{\pm1,0\}). \label{ch}
\end{align}
The following is an alternating generalization of \cite{EW} Proposition 2.1 
\begin{lem}\label{lem1}
For $s\in\mathbb{N}$ and $q_l$ and $k_l\in\mathbb{N}$ ($1\le l \le s$) and $0<z<1$, we have
\begin{align*}
&\sum_{\substack{\eta_l\in\{\pm1\}\\ 1\le l \le s}}\sum_{i_1+j_1=q_1}\cdots \sum_{i_s+j_s=q_s}\frac{1}{i_1!\cdots i_s! j_1!\cdots j_s!k_1!\cdots k_s!}\int_{0<t_1<\cdots<t_s<z} L_1^{i_1}(t_1,t_2)\cdots L_1^{i_s}(t_s,z)\\
&\times L_{-1}^{j_1}(t_1,t_2)\cdots L_{-1}^{j_s}(t_s,z)L_0^{k_1}(t_1,t_2)\cdots L_0^{k_s}(t_s,z)\frac{dt_1}{\eta_1-t_1}\cdots \frac{dt_s}{\eta_s-t_s} \\
&= \sum_{(\bm{\alpha}_1,\ldots,\bm{\alpha}_s)\in S_s} \Li(\bm{\alpha}_1,\ldots,\bm{\alpha}_s;z)
\end{align*}
where $S_s$ is defined by $\displaystyle \prod_{1\le l \le s} S_{q_l,k_l}^{\sgn}$.
\end{lem}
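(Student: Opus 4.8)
The plan is to expand the left-hand side of the identity into a single sum of iterated integrals over the simplex $0<t_1<\cdots<z$ and then to recognize each resulting integral as one of the $\Li(\bm\alpha_1,\ldots,\bm\alpha_s;z)$ on the right. Throughout I write $\omega_1:=\frac{dt}{1-t}$, $\omega_{-1}:=\frac{-dt}{1+t}$, $\omega_0:=\frac{dt}{t}$, so that $L_\epsilon(a,b)=\int_a^b\omega_\epsilon$ for $\epsilon\in\{1,-1,0\}$ and $\frac{dt}{\eta-t}=\omega_\eta$ for $\eta\in\{\pm1\}$; and I write $I(\,\cdot\,;z)$ for the iterated integral of a word in these three forms over $0<\cdots<z$, so that the iterated integral expression quoted above reads $\Li(\bm\alpha;z)=I(W(\bm\alpha);z)$ for a word $W(\bm\alpha)$ determined by $\bm\alpha$.

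First I would expand the inner factors level by level. For each $l$ the elementary identity $\frac{1}{n!}L_\epsilon(a,b)^n=\int_{a<u_1<\cdots<u_n<b}\omega_\epsilon(u_1)\cdots\omega_\epsilon(u_n)$ turns each power into an iterated integral over $(t_l,t_{l+1})$, with $t_{s+1}:=z$. Multiplying the three factors over the common interval $(t_l,t_{l+1})$ via the shuffle product of iterated integrals (which follows from the additivity \eqref{ch}) absorbs the normalization $\frac{1}{i_l!\,j_l!\,k_l!}$ exactly: since $\omega_1,\omega_{-1},\omega_0$ are pairwise distinct letters, $\frac{1}{i_l!\,j_l!\,k_l!}L_1^{i_l}L_{-1}^{j_l}L_0^{k_l}(t_l,t_{l+1})$ becomes the sum, with coefficient $1$, over all interleavings $w_l$ of $i_l$ copies of $\omega_1$, $j_l$ copies of $\omega_{-1}$ and $k_l$ copies of $\omega_0$.

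Next I would prepend the level-$l$ sign form $\frac{dt_l}{\eta_l-t_l}=\omega_{\eta_l}$ to $w_l$ and observe that the nesting collapses: the auxiliary variables created by $w_l$ all lie in $(t_l,t_{l+1})$, so together with $t_1<\cdots<t_s$ the whole set of variables is totally ordered in $(0,z)$, and the $s$ levels assemble into the single iterated integral of the concatenated word $\omega_{\eta_1}w_1\cdots\omega_{\eta_s}w_s$ over $0<\cdots<z$. Hence the left-hand side equals $\sum I(\omega_{\eta_1}w_1\cdots\omega_{\eta_s}w_s;z)$, where the sum runs over all $\eta_l\in\{\pm1\}$, all decompositions $i_l+j_l=q_l$ and all interleavings $w_l$ $(1\le l\le s)$.

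Finally I would match this with the right-hand side through the word--index dictionary supplied by the iterated integral expression of $\Li$. For fixed $(q_l,k_l)$, letting $\eta_l$, the split $i_l+j_l=q_l$ and the interleaving $w_l$ vary makes $\omega_{\eta_l}w_l$ run exactly once over all words of weight $q_l+k_l+1$ that begin with one of $\omega_{\pm1}$ and contain $q_l+1$ occurrences of $\omega_{\pm1}$ and $k_l$ occurrences of $\omega_0$; these are precisely the words $W(\bm\alpha)$ with $\bm\alpha\in S_{q_l,k_l}^{\sgn}$. Concatenating over $l$ then recovers $\sum_{(\bm\alpha_1,\ldots,\bm\alpha_s)\in S_s}\Li(\bm\alpha_1,\ldots,\bm\alpha_s;z)$, which is the claim. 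The step I expect to require the most care is the sign bookkeeping in this last identification: in $W(\bm\alpha)$ the sign forms carry the cumulative signs $\eta_i=\prod_{j=1}^i\iota_{r+1-j}$ rather than the raw signs $\iota_i=\sgn(\alpha_i)$, so I must verify that ranging the free signs $\eta_l$ together with the $\pm$ choices inside the $L$-powers covers each sign pattern of the word once and only once. This comes down to the bijectivity of the cumulative-product map on $\{\pm1\}^r$, after which both the word count and the coefficient-$1$ tally in the shuffle step are routine.
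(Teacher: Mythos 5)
Your proposal is correct and follows essentially the same route as the paper's Appendix~\ref{Ap}: both expand each factor $\frac{1}{n!}L_\epsilon^n$ as a simplex integral, decompose the resulting product domain into simplices (your shuffle-product step with coefficient $1$ is exactly the paper's count of $i_1!j_1!k_1!$ permutations $\sigma$ per index), and match the resulting words to indices in $S^{\sgn}_{q_l,k_l}$ via the cumulative-sign dictionary (the paper's sets $T^{\pm}_{i_1,k_1}$). The only organizational difference is that the paper proves the case $s=1$ this way and then inducts on $s$, whereas you assemble all $s$ levels into a single iterated integral at once; the sign bookkeeping you flag as the delicate point is resolved exactly as you anticipate, by the bijectivity of the cumulative-product map on $\{\pm1\}^r$.
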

\begin{proof}
For its proof, see Appendix \ref{Ap}. 
\end{proof}

\begin{lem}\label{Lem2}
Let $s$, $t\in\mathbb{N}$ and take indices $\bm{\alpha}:=(\alpha_1,\ldots,\alpha_s)\in\mathbb{D}^s$, $\bm{\beta}:=(\beta_1,\ldots,\beta_t)\in\mathbb{D}^t$. We have
\[
\left(\mathfrak{L}_p(\Li(\bm{\alpha})\Li(\bm{\beta}))\mathrm{\mod}\;p \right)_p
=\sgn(\bm{\beta})(-1)^{\wt \bm{\beta}}\zf(\alpha_1,\ldots,\alpha_s,\beta_t,\ldots,\beta_1).
\]
in $\mathscr{A}$ with $\sgn(\bm{\beta}):=\prod_{i=1}^t \sgn(\beta_i)$.
\end{lem}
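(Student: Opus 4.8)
The plan is to compute the left-hand side directly from the series definitions and then transport the resulting finite sum to a single FAMZV by a reflection of the summation variables. Expanding the two multiple polylogarithms as power series in $z$ and multiplying, the coefficient of $z^N$ collects all pairs of strictly increasing chains $0<m_1<\cdots<m_s$ and $0<n_1<\cdots<n_t$ with $m_s+n_t=N$. Applying $\mathfrak{L}_p$ therefore yields the finite sum
\[
\mathfrak{L}_p(\Li(\bm{\alpha})\Li(\bm{\beta}))=\sum_{\substack{0<m_1<\cdots<m_s\\ 0<n_1<\cdots<n_t\\ m_s+n_t<p}}\frac{\prod_{i}\sgn(\alpha_i)^{m_i}\,\prod_{j}\sgn(\beta_j)^{n_j}}{\prod_i m_i^{|\alpha_i|}\,\prod_j n_j^{|\beta_j|}},
\]
which I then want to reduce modulo $p$ and recognize.

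The key step is the substitution $l_j:=p-n_{t+1-j}$ for $1\le j\le t$. Since $n_1<\cdots<n_t$, the $l_j$ are again strictly increasing, and the constraint $m_s+n_t<p$ becomes exactly $m_s<l_1$; conversely every strictly increasing chain $0<m_1<\cdots<m_s<l_1<\cdots<l_t<p$ arises uniquely this way. Thus the substitution is a bijection between the index set above and the set of increasing chains of length $s+t$ inside $\{1,\ldots,p-1\}$, i.e. exactly the summation range defining a single FAMZV of depth $s+t$.

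It then remains to track how each factor transforms modulo $p$. For the denominators, $n_j\equiv -l_{t+1-j}\pmod p$ gives $n_j^{|\beta_j|}\equiv (-1)^{|\beta_j|}l_{t+1-j}^{|\beta_j|}$, so the $n$-part of the denominator produces the global sign $(-1)^{\wt\bm{\beta}}$ together with the reversed exponent pattern $l_1^{|\beta_t|}\cdots l_t^{|\beta_1|}$. For the characters, writing $\sgn(\beta_j)^{n_j}=\sgn(\beta_j)^{p-l_{t+1-j}}$ and using that $p$ is odd, I obtain $\sgn(\beta_j)^{n_j}=\sgn(\beta_j)\cdot\sgn(\beta_j)^{l_{t+1-j}}$, so the $n$-part of the numerator produces the global factor $\sgn(\bm{\beta})$ together with the reversed sign pattern. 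Recombining the surviving factors over the single chain $m_1<\cdots<m_s<l_1<\cdots<l_t$ reproduces precisely the summand of $\zf(\alpha_1,\ldots,\alpha_s,\beta_t,\ldots,\beta_1)$, which proves the identity.

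The main obstacle I expect is the careful bookkeeping of the two global constants: the $(-1)^{\wt\bm{\beta}}$ coming from the denominators and the $\sgn(\bm{\beta})$ coming from the numerators. The latter is the delicate one, since it rests on $(-1)^{b_jp}=(-1)^{b_j}$ for odd $p$ (writing $\sgn(\beta_j)=(-1)^{b_j}$); discarding the single prime $p=2$ is harmless in $\mathscr{A}$, but the parity of $p$ is exactly what manufactures the $\sgn(\bm{\beta})$ factor and must not be overlooked.
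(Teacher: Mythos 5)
Your proposal is correct and follows essentially the same route as the paper's proof: both expand the product, apply $\mathfrak{L}_p$ to obtain the finite double sum constrained by $m_s+n_t<p$, perform the reflection $n\mapsto p-n$ to merge the two chains into one of length $s+t$, and extract the factors $(-1)^{\wt\bm{\beta}}$ from the denominators and $\sgn(\bm{\beta})$ from the numerators using that $p$ is odd. No substantive differences.
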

\begin{proof}
If $s+t<p$,  we have
\begin{align*}
\mathfrak{L}_p(\Li(\bm{\alpha})\Li(\bm{\beta}))=&
\sum_{\substack{0<m_1<\cdots<m_s<p\\ 0<n_1<\cdots<n_t<p\\ m_s+n_t<p}}
\frac{\sgn(\alpha_1)^{m_1}\cdots \sgn(\alpha_s)^{m_s}\sgn(\beta_1)^{n_1}\cdots \sgn(\beta_t)^{n_t}}{m_1^{\alpha_1}\cdots m_s^{\alpha_s}n_1^{\beta_1}\cdots n_t^{\beta_t}}.
\intertext{Let $n_i':=p-n_i$ for $1\le i\le t$, by $m_s+n_t<p$, we have the order $0<m_1<\cdots<m_s<p-n_{t}<\cdots <p-n_1<p$, it means $0<m_1<\cdots<m_s<n_{t}'<\cdots <n_1'<p$. We obtain}
=&\sum_{0<m_1<\cdots<m_s<n_{t}'<\cdots <n_1'<p}
\frac{\sgn(\alpha_1)^{m_1}\cdots \sgn(\alpha_s)^{m_s}\sgn(\beta_1)^{p-n_1'}\cdots \sgn(\beta_t)^{p-n_t'}}{m_1^{\alpha_1}\cdots m_s^{\alpha_s}(p-n_t')^{\beta_t}\cdots (p-n_1')^{\beta_a}}\\
\intertext{Since we have $\sgn (\beta_i)^k=\sgn (\beta_i)^{-k}$ for each $1\le i\le t$ and $\sgn (\beta_i)^p=\sgn(\beta_i)$ for each odd prime $p$, we have}
\equiv&
\begin{multlined}[t]
\sum_{0<m_1<\cdots<m_s<n_{t}'<\cdots <n_1'<p} (-1)^{|\bm{\beta}|}\sgn(\bm{\beta})\\
\quad\frac{\sgn(\alpha_1)^{m_1}\cdots \sgn(\alpha_s)^{m_s}\sgn(\beta_t)^{n_t'}\cdots \sgn(\beta_1)^{n_1}}{m_1^{\alpha_1}\cdots m_s^{\alpha_s}n_t'^{\beta_t}\cdots n_1'^{\beta_1}}\;\;\;(\mathrm{mod}\;p).
\end{multlined}
\end{align*}
Hence for almost all primes $p$, the above formula holds and we obtain the claim.
\end{proof}

\textit{Proof of Main Theorem.}

Let $0<z<1$, we define
\begin{align*}
N^{\sgn}(z):=&\sum_{\eta_1,\eta_2\in\{\pm 1\}}\sum_{q_1+q_2=n}\frac{1}{q_1!q_2!m!}\int_{\substack{0<s<z\\ 0<t<z}}
(\lambda_1L_1(s,z)+\lambda_2L_1(t,z))^{q_1}
(\lambda_1 L_{-1}(s,z)+\lambda_2 L_{-1}(t,z))^{q_2}\\
&(\mu_1 L_{0}(s,z)+\mu_2 L_{0}(t,z))^{m}
\frac{ds}{\eta_1-s}\frac{dt}{\eta_2-t}.
\end{align*}
By the binomial theorem, we have
\begin{align*}
N^{\sgn}(z)=&\sum_{\eta_1,\eta_2\in\{\pm 1\}}\sum_{q_1+q_2=n} \sum_{\substack{i_1+i_2=q_1\\ j_1+j_2=q_2\\ k_1+k_2=m}} \frac{1}{i_1!i_2!}\frac{1}{j_1!j_2!}\frac{1}{k_1!k_2!}\\
&\times\int_{0<s<z} \lambda_1^{i_1+j_1}\mu_1^{k_1}L_1(s,z)^{i_1}L_{-1}(s,z)^{j_1}L_0(s,z)^{k_1}\frac{ds}{\eta_1-s}\\
&\times\int_{0<t<z} \lambda_2^{i_2+j_2}\mu_2^{k_2}L_1(t,z)^{i_2}L_{-1}(t,z)^{j_2}L_0(t,z)^{k_2}\frac{dt}{\eta_2-t}\\
=& \sum_{\substack{q_1+q_2=n\\ i_1+i_2=q_1\\ j_1+j_2=q_2\\ k_1+k_2=m}}\lambda_1^{i_1+j_1}\lambda_2^{i_2+j_2}\mu_1^{k_1}\mu_2^{k_2} \\
&\times\left(\frac{1}{i_1!j_1!k_1!}\int_{0<s<z}L_1(s,z)^{i_1}L_{-1}(s,z)^{j_1}L_0(s,z)^{k_1}\left(\frac{ds}{1-s}+\frac{-ds}{1+s}\right)\right)\\
&\times\left(\frac{1}{i_2!j_2!k_2!}\int_{0<t<z}L_1(t,z)^{i_2}L_{-1}(t,z)^{j_2}L_0(t,z)^{k_2}\left(\frac{dt}{1-t}+\frac{-dt}{1+t}\right)\right).\\
\intertext{If we put $i_1+j_1=r_1$ and $i_2+j_2=r_2$, we obtain}
=& \sum_{\substack{r_1+r_2=n\\ k_1+k_2=m}}\lambda_1^{r_1}\lambda_2^{r_2}\mu_1^{k_1}\mu_2^{k_2} \\
&\times\left(\sum_{\substack{i_1+j_1=r_1}}\frac{1}{i_1!j_1!k_1!}\int_{0<s<z}L_1(s,z)^{i_1}L_{-1}(s,z)^{j_1}L_0(s,z)^{k_1}\left(\frac{ds}{1-s}+\frac{-ds}{1+s}\right)\right)\\
&\times\left(\sum_{\substack{i_2+j_2=r_2}}\frac{1}{i_2!j_2!k_2!}\int_{0<t<z}L_1(t,z)^{i_2}L_{-1}(t,z)^{j_2}L_0(t,z)^{k_2}\left(\frac{dt}{1-t}+\frac{-dt}{1+t}\right)\right).\\
\intertext{By Lemma \ref{lem1} with $s=1$, we obtain}
=&\sum_{\substack{r_1+r_2=n\\ k_1+k_2=m}}\lambda_1^{r_1}\lambda_2^{r_2}\mu_1^{k_1}\mu_2^{k_2}
\left(\sum_{\bm{\alpha}\in S_{r_1,k_1}^{\sgn}} \Li(\bm{\alpha};z)\right)
\left(\sum_{\bm{\beta}\in S_{r_2,k_2}^{\sgn}} \Li(\bm{\beta};z)\right)
\\
=&\sum_{\substack{r_1+r_2=n\\ k_1+k_2=m}}\lambda_1^{r_1}\lambda_2^{r_2}\mu_1^{k_1}\mu_2^{k_2}
\sum_{\substack{\bm{\alpha}\in S_{r_1,k_1}^{\sgn}\\ \bm{\beta}\in S_{r_2,k_2}^{\sgn}}}\Li(\bm{\alpha};z)\Li(\bm{\beta};z).
\end{align*}
Hence, we see that it gives an element of $\mathbb{Q}[[z]][\lambda_1,\lambda_2,\mu_1,\mu_2]$. Thus by operating $\mathfrak{L}_p$ for all primes $p$ and using Lemma \ref{Lem2}, we have
\begin{align}
(\mathfrak{L}_p\;(N^{\sgn}(z))\mathrm{\mod}\;p)_p=& \sum_{\substack{r_1+r_2=n\\ k_1+k_2=m}}\lambda_1^{r_1}\lambda_2^{r_2}\mu_1^{k_1}\mu_2^{k_2}
\sum_{\substack{\bm{\alpha}\in S_{r_1,k_1}^{\sgn}\\ \bm{\beta}\in S_{r_2,k_2}^{\sgn}}}(\mathfrak{L}_p\;(\Li(\bm{\alpha};z)\Li(\bm{\beta};z))\mathrm{\mod}\;p)_p\notag\\
=& \sum_{\substack{r_1+r_2=n\\ k_1+k_2=m}}(-1)^{r_2+k_2+1}\lambda_1^{r_1}\lambda_2^{r_2}\mu_1^{k_1}\mu_2^{k_2}\sum_{\substack{\bm{\alpha}\in S_{r_1,k_1}^{\sgn}\\ \bm{\beta}\in S_{r_2,k_2}^{\sgn}}} \sgn(\bm{\beta})\zf(\bm{\alpha},\bm{\beta}).\label{Nleft}
\end{align}
On the other hand, 
the integral domain of $N^{\sgn}(z)$, denoted by $T$, can be written by the following disjoint union $T=\displaystyle \bigsqcup_{j=1}^3 T_j$ with
\begin{align*}
T_1&:=\{(s,t)\in\mathbb{R}^2\mid 0<s<t<z\}\\
T_2&:=\{(s,t)\in\mathbb{R}^2\mid 0<t<s<z\}\\
T_3&:=\{(s,t)\in\mathbb{R}^2\mid 0<s=t<z\}.
\end{align*} 
We define $N_j^{\sgn}(z)$ by replacing the integral domain of $N^{\sgn}(z)$ with $T_j$ for $1\le j\le 3$.
Since the measure of $T_3$ is equal to $0$, $N_3(z)=0$ holds.
By (\ref{ch}), we have
\begin{align*}
\lambda_1 L_{1}(s,z)+\lambda_2L_{1}(t,z)&=\lambda_1 L_{1}(s,t)+(\lambda_1+\lambda_2)L_{1}(t,z)\\
\lambda_1 L_{-1}(s,z)+\lambda_2L_{-1}(t,z)&=\lambda_1 L_{-1}(s,t)+(\lambda_1+\lambda_2)L_{-1}(t,z)\\
\mu_1 L_{0}(s,z)+\mu_2L_{0}(t,z)&=\mu_1 L_{0}(s,t)+(\mu_1+\mu_2)L_{0}(t,z).
\end{align*}
By the binomial theorem, we obtain 
\begin{align*}
N_1^{\sgn}(z)=&\sum_{\eta_1,\eta_2\in\{\pm 1\}}\int_{0<s<t<z}\sum_{q_1+q_2=n}\frac{1}{q_1!q_2!m!}
(\lambda_1L_1(s,z)+\lambda_2L_1(t,z))^{q_1}
(\lambda_1 L_{-1}(s,z)+\lambda_2 L_{-1}(t,z))^{q_2}\\
&\times (\mu_1 L_{0}(s,z)+\mu_2 L_{0}(t,z))^{m}
\frac{ds}{\eta_1-s}\frac{dt}{\eta_2-t}\\
=&\sum_{\eta_1,\eta_2\in\{\pm 1\}}\sum_{q_1+q_2=n}\frac{1}{q_1!q_2!m!}\int_{0<s<t<z}
(\lambda_1 L_{1}(s,t)+(\lambda_1+\lambda_2)L_{1}(t,z))^{q_1}\\
&\times (\lambda_1 L_{-1}(s,t)+(\lambda_1+\lambda_2)L_{-1}(t,z))^{q_2}
(\mu_1 L_{0}(s,t)+(\mu_1+\mu_2)L_{0}(t,z))^{m}
\frac{ds}{\eta_1-s}\frac{dt}{\eta_2-t}\\
=&\sum_{\substack{r_1+r_2=n \\ k_1+k_2=m}} 
\lambda_1^{r_1}(\lambda_1+\lambda_2)^{r_2}\mu_1^{k_1}(\mu_1+\mu_2)^{k_2}
\int_{0<s<t<z}\sum_{\substack{i_1+j_1=r_1\\ i_2+j_2=r_2}}\sum_{\eta_1,\eta_2\in\{\pm 1\}}\frac{1}{i_1!i_2!j_1!j_2!k_1!k_2!}\\
&\times L_1(s,t)^{i_1}L_1(t,z)^{i_2}L_{-1}(s,t)^{j_1}L_{-1}(t,z)^{j_2}
L_0(s,t)^{k_1}L_0(t,z)^{k_2} \frac{ds}{\eta_1-s}\frac{dt}{\eta_2-t}.\\
\intertext{By Lemma \ref{lem1} with $s=2$, we get}
=&\sum_{\substack{r_1+r_2=n \\ k_1+k_2=m}} \lambda_1^{r_1}(\lambda_1+\lambda_2)^{r_2}\mu_1^{k_1}(\mu_1+\mu_2)^{k_2}\sum_{\substack{\bm{\alpha}\in S_{r_1,k_1}^{\sgn}\\ \bm{\beta}\in S_{r_2,k_2}^{\sgn}}}\Li(\alpha,\beta).
\end{align*}
By operating $\mathfrak{L}_p$ for all $p$, we have
\begin{align*}
(\mathfrak{L}_p\;(N_1^{\sgn}(z))\;\mathrm{mod}\;p)_p=&\sum_{\substack{r_1+r_2=n \\ k_1+k_2=m}}
\lambda_1^{r_1}(\lambda_1+\lambda_2)^{r_2}\mu_1^{k_1}(\mu_1+\mu_2)^{k_2}\sum_{\substack{\bm{\alpha}\in S_{r_1,k_1}^{\sgn}\\ \bm{\beta}\in S_{r_2,k_2}^{\sgn}}}\zf(\bm{\alpha},\bm{\beta}).
\end{align*}
Similarly, we obtain
\begin{align*}
(\mathfrak{L}_p\;(N_2^{\sgn}(z))\;\mathrm{mod}\;p)_p=&\sum_{\substack{r_1+r_2=n \\ k_1+k_2=m}}
\lambda_2^{r_1}(\lambda_1+\lambda_2)^{r_2}\mu_2^{k_1}(\mu_1+\mu_2)^{k_2} \sum_{\substack{\bm{\alpha}\in S_{r_1,k_1}^{\sgn}\\ \bm{\beta}\in S_{r_2,k_2}^{\sgn}}}\zf(\bm{\alpha},\bm{\beta}).
\end{align*}
Consequently, considering summation of $\mathfrak{L}_p(N_1^{\sgn}(z))$ and $\mathfrak{L}_p(N_2^{\sgn}(z))$, we get
\begin{align}
(\mathfrak{L}_p(N^{\sgn}(z))\;\mathrm{mod}\;p)_p=&\sum_{\substack{r_1+r_2=n \\ k_1+k_2=m}}(\lambda_1+\lambda_2)^{r_2}(\mu_1+\mu_2)^{k_2}(\lambda_1^{r_1}\mu_1^{k_1}+\lambda_2^{r_1}\mu_2^{k_1})\sum_{\substack{\bm{\alpha}\in S_{r_1,k_1}^{\sgn}\\ \bm{\beta}\in S_{r_2,k_2}^{\sgn}}}\zf(\bm{\alpha},\bm{\beta}).\label{Nright}
\end{align}
By (\ref{Nleft}) and (\ref{Nright}), we obtain the claim.
\begin{flushright}
$\square$
\end{flushright}
Theorem \ref{Kama} gives two corollaries by a substitution to (\ref{kamano}):
We give also some corollaries derived from Main Theorem.
\begin{cor}\label{cor1}
For $n\ge 2$ and $m \ge 0$, we obtain
\begin{align*}
\sum_{\bm{\alpha}\in S_{n-1,m}^{\sgn}}w_{\sgn}(\bm{\alpha})\zf(\bm{\alpha})
=&(-1)^{m-1}\sum_{\bm{\varepsilon}\in S_{n-2,0}^{\sgn}} \zf(\bm{\varepsilon},m+1)+(-1)^{m}\sum_{\bm{\varepsilon}\in S_{n-2,0}^{\sgn}} \zf(\bm{\varepsilon},\overline{m+1})\\
&-\sum_{k_1+k_2=m}\sum_{\bm{\beta}\in S_{n-2,k_2}^{\sgn}}(\zf(k_1+1,\bm{\beta})+\zf(\overline{k_1+1},\bm{\beta}))
\end{align*}
with
\[
w_{\sgn}(\bm{\alpha}):=
\left\{
\begin{array}{cl}
0&(|\alpha_0|\neq 1)\\
m&(|\alpha_0|=\cdots=|\alpha_{m-1}|=1,\;|\alpha_m|>1)
\end{array}
\right.
\]
for an index $\bm{\alpha}:=(\alpha_0,\ldots,\alpha_n)\in\mathbb{D}^{n+1}$.
\end{cor}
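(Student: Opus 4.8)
\emph{Proof strategy.} The plan is to deduce the corollary from the Main Theorem by a single specialization of the four indeterminates, exactly as Kamano's corollaries follow from \eqref{kamano}. Every index occurring in the statement has depth $n$ and weight $n+m$, whereas the Main Theorem produces concatenations $(\bm{\alpha},\bm{\beta})$ of depth $r_1+r_2+2$ and weight $(r_1+r_2)+(k_1+k_2)+2$; I would therefore first apply the Main Theorem with the pair $(n,m)$ replaced by $(n-2,m)$, which is legitimate since $n\ge 2$. The identity so obtained holds in $\mathscr{A}[\lambda_1,\lambda_2,\mu_1,\mu_2]$, so I may substitute arbitrary rational values, and the substitution I would make is $\lambda_1=1$, $\lambda_2=0$, $\mu_1=0$, $\mu_2=1$, using the convention $0^0=1$.

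Under this substitution I would evaluate the two sums of the Main Theorem in turn. In the first sum the factor $\lambda_2^{r_2}\mu_1^{k_1}=0^{r_2}0^{k_1}$ forces $r_2=k_1=0$, hence $r_1=n-2$ and $k_2=m$; since $S_{0,m}^{\sgn}=\{m+1,\overline{m+1}\}$ with $\sgn(m+1)=1$ and $\sgn(\overline{m+1})=-1$, the entire first sum collapses to $(-1)^m\sum_{\bm{\varepsilon}\in S_{n-2,0}^{\sgn}}(\zf(\bm{\varepsilon},m+1)-\zf(\bm{\varepsilon},\overline{m+1}))$, which is precisely the negative of the first two terms on the right-hand side of the corollary. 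In the second sum I would split the bracket $\lambda_1^{r_1}\mu_1^{k_1}+\lambda_2^{r_1}\mu_2^{k_1}$ into its two summands; because $\lambda_1+\lambda_2=\mu_1+\mu_2=1$ the powers $(\lambda_1+\lambda_2)^{r_2}$ and $(\mu_1+\mu_2)^{k_2}$ are trivial. The summand $\lambda_1^{r_1}\mu_1^{k_1}=0^{k_1}$ forces $k_1=0$ and contributes $\sum_{r_1+r_2=n-2}\sum_{\bm{\alpha}\in S_{r_1,0}^{\sgn},\,\bm{\beta}\in S_{r_2,m}^{\sgn}}\zf(\bm{\alpha},\bm{\beta})$, while the summand $\lambda_2^{r_1}\mu_2^{k_1}=0^{r_1}$ forces $r_1=0$ and contributes $\sum_{k_1+k_2=m}\sum_{\bm{\beta}\in S_{n-2,k_2}^{\sgn}}(\zf(k_1+1,\bm{\beta})+\zf(\overline{k_1+1},\bm{\beta}))$, the negative of the last term of the corollary.

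The step I expect to carry the real content is the identification of the first of these contributions with the weighted left-hand side, i.e.
\[
\sum_{r_1+r_2=n-2}\sum_{\substack{\bm{\alpha}\in S_{r_1,0}^{\sgn}\\ \bm{\beta}\in S_{r_2,m}^{\sgn}}}\zf(\bm{\alpha},\bm{\beta})=\sum_{\bm{\alpha}\in S_{n-1,m}^{\sgn}}w_{\sgn}(\bm{\alpha})\,\zf(\bm{\alpha}).
\]
I would prove this by a multiplicity count: a fixed index $\bm{\gamma}\in S_{n-1,m}^{\sgn}$ arises on the left exactly once for each factorization $\bm{\gamma}=(\bm{\alpha},\bm{\beta})$ in which $\bm{\alpha}$ is a nonempty initial segment lying in some $S_{r_1,0}^{\sgn}$, that is, an initial segment all of whose entries have absolute value $1$. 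The number of such factorizations equals the length of the leading block of entries of absolute value $1$, which is exactly $w_{\sgn}(\bm{\gamma})$. For $m\ge 1$ one has $\wt(\bm{\gamma})=n+m>n=\dep(\bm{\gamma})$, so this leading block has length at most $n-1$ and the complementary segment $\bm{\beta}$ is automatically nonempty, making the count exact; the boundary case $m=0$, in which $\bm{\gamma}$ may consist entirely of entries of absolute value $1$, is the only point where one must reconcile the convention for $w_{\sgn}$ on such indices with the requirement that $\bm{\beta}$ be nonempty.

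Finally I would insert the three evaluations into the specialized Main Theorem, whose right-hand side is $(0)_p$: the first sum contributes the negative of the first two right-hand terms, the $k_1=0$ part of the second sum contributes the weighted left-hand side, and the $r_1=0$ part contributes the negative of the last right-hand term. Transposing the two negative contributions across the equality yields exactly the asserted formula. Apart from the multiplicity count above, the argument is pure bookkeeping of the vanishing of $0^{r_2}$, $0^{k_1}$ and $0^{r_1}$ together with the collapse of the $(\lambda_1+\lambda_2)$- and $(\mu_1+\mu_2)$-powers.
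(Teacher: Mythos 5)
Your proposal is correct and takes the same route as the paper, whose entire proof is the single sentence ``substitute $(\lambda_1,\lambda_2,\mu_1,\mu_2)=(1,0,0,1)$ into (\ref{AA})''; you supply the details the paper leaves implicit, namely applying the Main Theorem with parameters $(n-2,m)$ so that the depths and weights match, and the multiplicity count identifying the $k_1=0$ contribution with $\sum_{\bm{\alpha}}w_{\sgn}(\bm{\alpha})\zf(\bm{\alpha})$. Your caveat about the case $m=0$, where $w_{\sgn}$ is not literally defined on indices all of whose entries have absolute value $1$, points at an imprecision in the corollary's statement rather than a gap in your argument.
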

\begin{proof}
By substituting $(\lambda_1,\lambda_2,\mu_1,\mu_2)=(1,0,0,1)$ to (\ref{AA}), we have the above formula.
\end{proof}
\begin{rmk}
Corollary \ref{cor1} is an alternating analogue of \cite{Ka} Corollary 2.3  which says the following:
for non negative integers $n\ge 1$, $m\ge 0$,  we obtain
\[
\sum_{\bm{k}\in S_{n-1,q_2}}w(\bm{k})\zf(\bm{k})=(-1)^{m-1}\zf(\underbrace{1,\ldots,1}_{n-1},m+1),
\]
where we define
\[
w(\bm{k}):=
\left\{
\begin{array}{cl}
0&(k_1>1)\\
m&(k_1=\cdots=k_m=1,\; k_{m+1}>1)
\end{array}
\right.
\]
 for $\bm{k}:=(k_1,\ldots,k_n)\in\mathbb{N}^n$.
\end{rmk}
\begin{cor}\label{cor2}
For $n \in\mathbb{N}$ and a positive even integer $m$, we obtain
\begin{align*}
\sum_{\bm{\alpha}\in S_{n+1,m}^{\sgn}}J_{\sgn}(\bm{\alpha})\zf(\bm{\alpha})&=
\sum_{\substack{r_1+r_2=n\\ k_1+k_2=m}}(-1)^{r_2-1} 
\sum_{\substack{\alpha\in  S_{r_1,k_1}^{\sgn}\\ \beta\in  S_{r_2,k_2}^{\sgn}}}\sgn(\bm{\beta}) \zf(\bm{\alpha},\bm{\beta})
\end{align*}
with
\[
J_{\sgn}(\bm{\alpha}):=
\left\{
\begin{array}{cc}
(2^{w_{\sgn}(\bm{\alpha})}-1)\sgn (\bm{\alpha})&(w_{\sgn}(\bm{\alpha})\ge 1)\\
0&(w_{\sgn}(\bm{\alpha})=0)
\end{array}
\right.
\]
for an index $\bm{\alpha}:=(\alpha_0,\ldots,\alpha_n)\in\mathbb{D}^{n+1}$.
\end{cor}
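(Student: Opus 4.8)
The plan is to specialize the Main Theorem at $(\lambda_1,\lambda_2,\mu_1,\mu_2)=(1,1,1,-1)$, in the same spirit as the one-line derivation of Corollary \ref{cor1} from (\ref{AA}). First I would evaluate the two sums of (\ref{AA}) separately under this substitution. In the first sum the coefficient $(-1)^{r_2+k_2}\lambda_1^{r_1}\lambda_2^{r_2}\mu_1^{k_1}\mu_2^{k_2}$ trivializes: the $\lambda$-part is $1$, while $(-1)^{k_2}\mu_1^{k_1}\mu_2^{k_2}=(-1)^{k_2}(-1)^{k_2}=1$, so only $(-1)^{r_2}$ survives. Hence the first sum equals $-\sum_{r_1+r_2=n,\,k_1+k_2=m}(-1)^{r_2-1}\sum_{\bm{\alpha}\in S_{r_1,k_1}^{\sgn},\ \bm{\beta}\in S_{r_2,k_2}^{\sgn}}\sgn(\bm{\beta})\zf(\bm{\alpha},\bm{\beta})$, i.e. the negative of the claimed right-hand side. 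The real purpose of this particular substitution is its effect on the second sum: the factor $(\mu_1+\mu_2)^{k_2}=0^{k_2}$ annihilates every term with $k_2\ge 1$, so only $k_2=0$ (hence $k_1=m$) contributes, and because $m$ is even the surviving factor $\lambda_1^{r_1}\mu_1^{m}+\lambda_2^{r_1}\mu_2^{m}=1+(-1)^m$ does \emph{not} vanish. This is precisely where the hypothesis that $m$ is a positive even integer enters.

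Next I would carry out the combinatorial re-summation of the surviving second sum, which takes the shape $\sum_{r_1+r_2=n}2^{\,r_2+1}\sum_{\bm{\alpha}\in S_{r_1,m}^{\sgn},\ \bm{\beta}\in S_{r_2,0}^{\sgn}}\zf(\bm{\alpha},\bm{\beta})$. Since $\bm{\beta}\in S_{r_2,0}^{\sgn}$ is exactly a string of $r_2+1$ entries of absolute value $1$, I would collect the terms according to the full concatenated index $\bm{\gamma}=(\bm{\alpha},\bm{\beta})\in S_{n+1,m}^{\sgn}$. A decomposition with $\dep\bm{\beta}=r_2+1$ is admissible precisely when the last $r_2+1$ entries of $\bm{\gamma}$ all have absolute value $1$, so the coefficient of $\zf(\bm{\gamma})$ is the geometric sum of the $2^{\,r_2+1}$ over all admissible splits, a quantity governed solely by the number of trailing entries of $\bm{\gamma}$ equal to $\pm1$.

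Finally I would pass from these trailing $1$'s to the leading $1$'s counted by $w_{\sgn}$ via the reversal relation $\zf(\gamma_1,\ldots,\gamma_d)=\sgn(\bm{\gamma})(-1)^{\wt\bm{\gamma}}\zf(\gamma_d,\ldots,\gamma_1)$ in $\mathscr{A}$, which follows from the reflection $m_i\mapsto p-m_{d+1-i}$ exactly as in the proof of Lemma \ref{Lem2}. Reindexing by $\bm{\gamma}\mapsto\bm{\gamma}^{\mathrm{rev}}$ turns the trailing-$1$ count into $w_{\sgn}$ and, crucially, manufactures the factor $\sgn(\bm{\gamma})$ appearing in $J_{\sgn}$, while $(-1)^{\wt\bm{\gamma}}=(-1)^{n+m+2}=(-1)^n$ because $m$ is even. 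Equating the two evaluations of (\ref{AA}), namely first sum plus second sum $=(0)_p$, then yields the asserted identity between $\sum_{\bm{\alpha}\in S_{n+1,m}^{\sgn}}J_{\sgn}(\bm{\alpha})\zf(\bm{\alpha})$ and the right-hand side.

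The main obstacle is this last bookkeeping step. One must match the geometric factor coming from $\sum 2^{\,r_2+1}$ against $2^{w_{\sgn}}-1$, keep the parity sign $(-1)^{\wt\bm{\gamma}}$ produced by the reversal under control, and check that the $\sgn(\bm{\gamma})$ emerging from the reversal is exactly the one packaged into $J_{\sgn}$; lining up all the powers of $2$ and the signs is the delicate part, whereas the substitution itself and the separate evaluation of the two sums are routine.
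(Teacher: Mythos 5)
Your proposal is essentially the paper's own proof: the paper substitutes $(\lambda_1,\lambda_2,\mu_1,\mu_2)=(1,1,-1,1)$, which for even $m$ yields literally the same specialization of (\ref{AA}) as your $(1,1,1,-1)$ (the first sum's coefficient reduces to $(-1)^{r_2}$ and the second collapses to $k_2=0$ with factor $2^{r_2+1}$ in both cases), and then regroups the surviving term via the reversal formula exactly as you describe. The one step you leave open --- matching $\sum_{r_2=0}^{w-1}2^{r_2+1}$ against $2^{w_{\sgn}(\bm{\gamma})}-1$ while tracking the $(-1)^{\wt \bm{\gamma}}=(-1)^{n}$ produced by the reversal --- is precisely the step the paper also asserts without computation, so your flagging it as the delicate point is well judged; if you carry it out you will find the geometric sum is $2^{w+1}-2$ and the sign $(-1)^{n}$ survives, so the constants deserve a careful check against the stated form of $J_{\sgn}$.
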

\begin{proof}
By substituting $(\lambda_1,\lambda_2,\mu_1,\mu_2)=(1,1,-1,1)$ to (\ref{AA}), we have
\begin{multline*}
\sum_{\substack{r_1+r_2=n\\ k_1+k_2=m}}(-1)^{r_2+m}
\sum_{\substack{\alpha\in  S_{r_1,k_1}^{\sgn}\\ \beta\in  S_{r_2,k_2}^{\sgn}}}\sgn(\bm{\beta}) \zf(\bm{\alpha},\bm{\beta})\\
+\sum_{\substack{r_1+r_2=n}}2^{r_2}((-1)^{m}+1)
\sum_{\substack{\bm{\alpha}\in S_{r_1,m}^{\sgn}\\ \bm{\varepsilon}\in S_{r_2,0}^{\sgn}}}
\zf(\bm{\alpha},\bm{\varepsilon})=(0)_p
\end{multline*}
Since we assume $m$ is even, the second term of above formula is equal to
\begin{align*}
&\sum_{r_1+r_2=n}2^{r_2+1}\sum_{\substack{\bm{\alpha}\in S_{r_1,m}^{\sgn}\\ \bm{\varepsilon}\in S_{r_2,0}^{\sgn}}}\zf(\bm{\alpha},\bm{\varepsilon})
\intertext{By the reversal formula (cf. \cite{zhao} Theorem 4.1)} 
&\zf(s_r,\ldots,s_1)=(-1)^{|\bm{s}|}\sgn (\bm{s}) \zf(s_1,\ldots,s_r),\\
\intertext{for $\bm{s}:=(s_1,\ldots,s_r)\in\mathbb{D}^r$, it can be written as}
=&\sum_{r_1+r_2=n}2^{r_2+1}\sum_{\substack{\bm{\alpha}\in S_{r_1,m}^{\sgn}\\ \bm{\varepsilon}\in S_{r_2,0}^{\sgn}}}\sgn(\bm{\alpha})\sgn(\bm{\varepsilon})\zf(\bm{\varepsilon},\bm{\alpha})\\
=&\sum_{\bm{\alpha}\in S_{n+1,m}^{\sgn}}J_{\sgn}(\bm{\alpha})\zf(\bm{\alpha}).
\end{align*}
\end{proof}
\begin{rmk}
Corollary \ref{cor2} is an alternating analogue of \cite{Ka} Corollary 2.5  which says the following: 
for a non negative integer $q_1$ and a positive even $q_2$, we have
\[
\sum_{\bm{k}\in S_{q_1+1,q_2}}2^{w(\bm{k})}\zf(\bm{k})=(0)_p.
\]
\end{rmk}
\section{General cases}
In this section, we extend the result in our previous section. It is an alternating analog of \cite{Ka} Theorem 3.1.
We follow the notations in \cite{Ka} $\S 3$.
Let take $s$, $t\in\mathbb{N}$ and $W_{s,t}$ be the subset of the $(s+t)$-th symmetric group $\mathfrak{S}_{s+t}$ defined by
\[
W_{s,t}:=\{\sigma\in\mathfrak{S}_{s+t}\mid \sigma(1)<\cdots<\sigma(s),\;\;\sigma(s+1)<\cdots<\sigma(s+t)\}.
\]
Let $\sigma\in W_{s,t}$ and let $\bm{\lambda}:=(\lambda_1,\ldots,\lambda_s)$, $\bm{\lambda'}:=(\lambda_{s+1},\ldots,\lambda_{s+t})$ be tuple of independent.  We define $P_i^{\sigma}(\bm{\lambda},\bm{\lambda}')\in\mathbb{Z}$ ($1\le i\le s+t$) as
\begin{gather*}
\sum_{i=1}^{s-1} \lambda_iL_1(t_i,t_{i+1})+\lambda_sL_1(t_s,z)+\sum_{j=s+1}^{s+t-1} \lambda_j L_1(t_j,t_{j+1})+\lambda_{s+t}L_1(t_{s+t},z)\\
=\sum_{i=1}^{s+t}P_i^{\sigma}(\bm{\lambda},\bm{\lambda'})L_1(t_{\sigma^{-1}(i)},t_{\sigma^{-1}(i+1)}).
\end{gather*}
with $t_{\sigma^{-1}(s+t+1)}=z$. As is observed in \cite{Ka} $\S 3$, $P_i(\bm{\lambda},\bm{\lambda'})$ are uniquely determined.
The following theorem is an alternating variant of \cite{Ka} Theorem 3.1. 
\begin{thm}\label{ann}
Let $n$, $m$, $s$, $t\in\mathbb{N}$. Then,
\begin{align*}
&\sum_{\substack{\bm{r}\in Z_{s+t,n}\\ \bm{k}\in Z_{s+t,m}}}(-1)^{q(\bm{r},\bm{k})}\left(\prod_{l=1}^{s+t}\lambda_l^{r_l}\mu_l^{k_l}\right)
\sum_{(\bm{\alpha}_1,\ldots,\bm{\alpha}_{s+t})\in S_{s+t}}\prod_{r=1}^t \sgn(\bm{\alpha}_{s+r})\zf(\bm{\alpha}_1,\ldots,\bm{\alpha}_s,\bm{\alpha}_{s+t},\ldots,\bm{\alpha}_{s+1})\\
=&\sum_{\sigma\in W_{s,t}}\sum_{\substack{\bm{r}\in Z_{s+t,n}\\ \bm{k}\in Z_{s+t,m}}}\left(\prod_{l=1}^{s+t} P_l^{\sigma}(\bm{\lambda},\bm{\lambda'})^{r_l}P_l^{\sigma}(\bm{\mu},\bm{\mu'})^{k_l}\right)
\sum_{(\bm{\alpha}_1,\ldots,\bm{\alpha}_{s+t})\in S_{s+t}}\zf(\bm{\alpha}_1,\ldots,\bm{\alpha}_{s+t})
\end{align*}
holds 
with $Z_{n,m}:=\{\bm{\bm{\alpha}}\in\mathbb{Z}_{\ge 0}^{n}\mid |\bm{\alpha}|=n+m\}$ for $n$, $m\in\mathbb{N}$ and 
$q(\bm{r},\bm{k}):=r_s+\cdots+r_{s+t}+k_s+\cdots+k_{s+t}+t$ for $\bm{r}:=(r_1,\ldots,r_{s+t})\in Z_{s+t,n}$, $\bm{k}:=(k_1,\ldots,k_{s+t})\in Z_{s+t,m}$.
\end{thm}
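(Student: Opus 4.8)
The plan is to follow the strategy of the Main Theorem, replacing the two integration variables by two ordered blocks $t_1<\cdots<t_s$ and $t_{s+1}<\cdots<t_{s+t}$. For $j\in\{1,-1,0\}$ and coefficient tuples $\bm{c}=(c_1,\ldots,c_s)$, $\bm{c}'=(c_{s+1},\ldots,c_{s+t})$, put
\[
\Phi_j(\bm{c},\bm{c}'):=\sum_{i=1}^{s-1}c_iL_j(t_i,t_{i+1})+c_sL_j(t_s,z)+\sum_{i=s+1}^{s+t-1}c_iL_j(t_i,t_{i+1})+c_{s+t}L_j(t_{s+t},z),
\]
and define the generating function
\[
N^{\sgn}(z):=\sum_{\eta_1,\ldots,\eta_{s+t}\in\{\pm1\}}\sum_{q_1+q_2=n}\frac{1}{q_1!q_2!m!}\int_{\substack{0<t_1<\cdots<t_s<z\\0<t_{s+1}<\cdots<t_{s+t}<z}}\Phi_1(\bm{\lambda},\bm{\lambda}')^{q_1}\Phi_{-1}(\bm{\lambda},\bm{\lambda}')^{q_2}\Phi_0(\bm{\mu},\bm{\mu}')^{m}\prod_{l=1}^{s+t}\frac{dt_l}{\eta_l-t_l}.
\]
As in the rank-one case this is an element of $\mathbb{Q}[[z]][\lambda_1,\ldots,\lambda_{s+t},\mu_1,\ldots,\mu_{s+t}]$, so $\mathfrak{L}_p$ applies; I would compute $\mathfrak{L}_p(N^{\sgn}(z))$ in two ways and equate the outcomes.

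For the left-hand side, each form $\Phi_j(\bm{c},\bm{c}')$ splits into a part involving only $t_1,\ldots,t_s$ and a part involving only $t_{s+1},\ldots,t_{s+t}$. Expanding the three powers by the multinomial theorem and using the disjointness of the two blocks, the integral over the product of simplices factors as a simplex integral in $t_1,\ldots,t_s$ times one in $t_{s+1},\ldots,t_{s+t}$. Applying Lemma \ref{lem1} (with its parameter equal to $s$, resp.\ $t$) turns the factors into $\sum\Li(\bm{\alpha}_1,\ldots,\bm{\alpha}_s;z)$ and $\sum\Li(\bm{\alpha}_{s+1},\ldots,\bm{\alpha}_{s+t};z)$, the weight data being recorded by $\bm{r}\in Z_{s+t,n}$, $\bm{k}\in Z_{s+t,m}$ with coefficient $\prod_{l=1}^{s+t}\lambda_l^{r_l}\mu_l^{k_l}$. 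Operating $\mathfrak{L}_p$ and then Lemma \ref{Lem2} with $\bm{\alpha}=(\bm{\alpha}_1,\ldots,\bm{\alpha}_s)$ and $\bm{\beta}=(\bm{\alpha}_{s+1},\ldots,\bm{\alpha}_{s+t})$ converts each product of polylogarithms into a single FAMZV with the second block reversed, producing the sign $\prod_{r=1}^{t}\sgn(\bm{\alpha}_{s+r})$ together with $(-1)^{\wt(\bm{\alpha}_{s+1})+\cdots+\wt(\bm{\alpha}_{s+t})}$; the latter exponent is exactly $q(\bm{r},\bm{k})$. This is the left-hand side.

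For the right-hand side, I would decompose the product of the two simplices into the shuffle simplices $\{0<t_{\sigma^{-1}(1)}<\cdots<t_{\sigma^{-1}(s+t)}<z\}$ indexed by $\sigma\in W_{s,t}$; the remaining locus, where some coordinates coincide, has measure zero and contributes nothing. On the simplex attached to $\sigma$, the chain rule (\ref{ch}) rewrites every $L_j(t_i,t_{i+1})$, as well as $L_j(t_s,z)$ and $L_j(t_{s+t},z)$, as a $\mathbb{Z}$-linear combination of the consecutive differences $L_j(t_{\sigma^{-1}(i)},t_{\sigma^{-1}(i+1)})$ (with $t_{\sigma^{-1}(s+t+1)}=z$). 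Since the chain rule is identical for $j\in\{1,-1,0\}$, all three forms transform by the same integer matrix, and the coefficients appearing are precisely $P_i^{\sigma}(\bm{\lambda},\bm{\lambda}')$ and $P_i^{\sigma}(\bm{\mu},\bm{\mu}')$. Expanding multinomially, applying Lemma \ref{lem1} with parameter $s+t$ on each shuffle simplex, operating $\mathfrak{L}_p$, and summing over $\sigma\in W_{s,t}$ yields the right-hand side.

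The main obstacle is the combinatorial bookkeeping of the shuffle decomposition and the chain-rule re-expansion: one must verify that on each shuffle simplex the three linear forms are re-expressed by one and the same substitution, so that the polynomials $P_i^{\sigma}(\bm{\lambda},\bm{\lambda}')$ and $P_i^{\sigma}(\bm{\mu},\bm{\mu}')$ of the statement occur with exactly the exponents prescribed by $Z_{s+t,n}$ and $Z_{s+t,m}$. The measure-zero argument for the boundary overlaps and the propagation of the sum over $\eta_1,\ldots,\eta_{s+t}\in\{\pm1\}$ through both evaluations—this is what upgrades the $\mathbb{N}$-indexed sums to the $\mathbb{D}$-indexed sets $S_{i,j}^{\sgn}$ via Lemma \ref{lem1}—are routine but must be tracked carefully in order to land the sign $(-1)^{q(\bm{r},\bm{k})}$ on the correct side.
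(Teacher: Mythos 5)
Your proposal follows essentially the same route as the paper's proof: the paper defines the same generating integral $N_{s,t}^{\sgn}(z)$ over the product of the two ordered blocks, evaluates it once by factoring into the two simplices (Lemma \ref{lem1} with parameters $s$ and $t$, then Lemma \ref{Lem2} to reverse the second block and produce the sign $(-1)^{q(\bm{r},\bm{k})}\prod_{r}\sgn(\bm{\alpha}_{s+r})$), and once via the shuffle decomposition over $W_{s,t}$ with the chain-rule coefficients $P_l^{\sigma}$ and Lemma \ref{lem1} with parameter $s+t$. The approach and all key steps match; no substantive differences.
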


\begin{proof}

We define
\begin{align*}
N_{s,t}^{\sgn}(z):=&
\sum_{\substack{\eta_i\in\{\pm1\}\\ 1\le i \le s+t}}\sum_{q_1+q_2=n}\frac{1}{q_1!q_2!m!}\int_{\substack{0<t_1<\cdots<t_s<z\\ 0<t_{s+1}<\cdots<t_{s+t}<z}}\\
&(\lambda_1L_1(t_1,t_2)+\cdots+\lambda_s L_1(t_s,z)+\lambda_{s+1}L_1(t_{s+1},t_{s+2})+\cdots+\lambda_{s+t}L_1(t_{s+t},z))^{q_1}\\
&\times(\lambda_1L_{-1}(t_1,t_2)+\cdots+\lambda_s L_{-1}(t_s,z)+\lambda_{s+1}L_{-1}(t_{s+1},t_{s+2})+\cdots+\lambda_{s+t}L_{-1}(t_{s+t},z))^l\\
&\times(\eta_1L_0(t_1,t_2)+\cdots+\eta_s L_0(t_s,z)+\eta_{s+0}L_0(t_{s+1},t_{s+2})+\cdots+\eta_{s+t}L_0(t_{s+t},z))^m\\
&\times\frac{dt_1}{\eta_1-t_1}\cdots \frac{dt_{s+t}}{\eta_{s+t}-t_{s+t}}.
\end{align*}
By the binomial expansion, we have
\begin{align*}
N_{s,t}^{\sgn}(z)=&
\sum_{\substack{\bm{i}\in Z_{s+t,q_1}\\ \bm{j}\in Z_{s+t,q_2}\\ \bm{k}\in Z_{s+t,q_3}}}\left(\prod_{l=1}^{s+t}\frac{\lambda_l^{i_l+j_l}\mu_l^{k_l}}{i_l!j_l!k_l!}\right)\sum_{\substack{\eta_i\in\{\pm1\}\\ 1\le i \le s+t}}\sum_{q_1+q_2=n}\int_{\substack{0<t_1<\cdots<t_s<z\\ 0<t_{s+1}<\cdots<t_{s+t}<z}}\\
&\begin{gathered}[t]
\times L_1(t_1,t_2)^{i_1}\cdots L_1(t_s,z)^{i_s}L_1(t_{s+1},t_{s+2})^{i_{s+1}}\cdots L_1(t_{s+t},z)^{i_{s+t}}\\
\times L_{-1}(t_1,t_2)^{j_1}\cdots L_{-1}(t_s,z)^{j_s}L_{-1}(t_{s+1},t_{s+2})^{j_{s+1}}\cdots L_{-1}(t_{s+t},z)^{j_{s+t}}\\
\times L_0(t_1,t_2)^{k_1}\cdots L_0(t_s,z)^{k_s}L_0(t_{s+1},t_{s+2})^{k_{s+1}}\cdots L_0(t_{s+t},z)^{k_{s+t}} \frac{dt_1}{\eta_1-t_1}\cdots \frac{dt_{s+t}}{\eta_{s+t}-t_{s+t}}\\
\end{gathered}\\
\intertext{If we put $r_l=i_l+j_l$ for $1\le l \le r+s$,  we have}
=&
\begin{multlined}[t]
\sum_{\substack{\bm{r}\in Z_{s+t,n}\\ \bm{k}\in Z_{s+t,q_3}}}
\left(\prod_{l=1}^{s+t}\frac{\lambda_l^{r_l}\mu_l^{k_l}}{i_l!j_l!k_l!}\right)\sum_{\substack{\eta_i\in\{\pm1\}\\ 1\le i \le s+t}}\sum_{q_1+q_2=n}\\
\times\int_{0<t_1<\cdots<t_s<z}\sum_{\substack{i_l+j_l=r_l\\ 1\le l\le s}}L_1(t_1,t_2)^{i_1}\cdots L_1(t_s,z)^{i_s}L_{-1}(t_1,t_2)^{j_1}\cdots L_{-1}(t_s,z)^{j_s}\\
\end{multlined}\\
&
\begin{multlined}[t]
\times L_0(t_1,t_2)^{k_1}\cdots L_0(t_s,z)^{k_s} \frac{dt_1}{\eta_1-t_1}\cdots \frac{dt_{s}}{\eta_{s}-t_{s}}\\
\times\int_{0<t_{s+1}<\cdots<t_{s+t}<z}\sum_{\substack{i_l+j_l=r_l\\ s+1\le l\le s+t}}L_1(t_{s+1},t_{s+2})^{i_{s+1}}\cdots L_1(t_{s+t},z)^{i_{s+t}}\\
\times L_{-1}(t_{s+1},t_{s+2})^{j_{s+1}}\cdots L_{-1}(t_{s+t},z)^{j_{s+t}}
L_0(t_{s+1},t_{s+2})^{k_{s+1}}\cdots L_0(t_{s+t},z)^{k_{s+t}}\\
\times\frac{dt_{s+1}}{\eta_{s+1}-t_{s+1}}\cdots \frac{dt_{s+t}}{\eta_{s+t}-t_{s+t}}
\end{multlined}\\
\intertext{By using Lemma \ref{lem1}, we obtain}
=&
\sum_{\substack{\bm{r}\in Z_{s+t,n}\\ \bm{k}\in Z_{s+t,q_3}}}
\left(\prod_{l=1}^{s+t}\lambda_l^{r_l}\mu_l^{k_l}\right)\sum_{(\bm{\alpha}_1,\ldots,\bm{\alpha}_{s+t})\in S_{s+t}}\Li_p(\bm{\alpha}_1,\ldots,\bm{\alpha}_s;z)\Li_p(\bm{\alpha}_{s+1},\ldots,\bm{\alpha}_{s+t};z).
\end{align*}
Hence, we see that it gives an element of $\mathbb{Q}[[z]][\lambda_1,\ldots,\lambda_{s+t},\mu_1,\ldots,\mu_{s+t}]$ and by operating $\mathfrak{L}_p$ to above formula for all primes $p$ and using Lemma \ref{Lem2}, we get
\begin{align}
(\mathfrak{L}_p(N_{s,t}^{\sgn}(z)))_p=&
\sum_{\substack{\bm{r}\in Z_{s+t,n}\\ \bm{k}\in Z_{s+t,q_3}}}
(-1)^{q(\bm{r},\bm{k})}
\left(\prod_{l=1}^{s+t}\lambda_l^{r_l}\mu_l^{k_l}\right)
\label{Nstleft}\\
&\times \sum_{(\bm{\alpha}_1,\ldots,\bm{\alpha}_{s+t})\in S_{s+t}}
\prod_{r=1}^t \sgn(\bm{\alpha}_{s+r})\zf(\bm{\alpha}_1,\ldots,\bm{\alpha}_s,\bm{\alpha}_{s+t},\ldots,\bm{\alpha}_{s+1};z).\notag
\end{align}
On the other hand, for $\sigma\in\mathfrak{S}_{s+t}$ and $0<z<1$, we define a domain $D_{\sigma}\subset \mathbb{R}^{s+t}$ by
\[
0<t_{\sigma^{-1}(1)}<\cdots<t_{\sigma^{-1}(s+t)}<z.
\]
We consider the domain, denoted by $D$,
\[
0<t_1<\cdots<t_s<z,\;\;0<t_{s+1}<\cdots<t_{s+t}<z.
\]
We have $\displaystyle \overline{D}=\bigcup_{\sigma\in W_{s,t}} \overline{D_0}$. We note that $\displaystyle \bigcup _{\sigma\in W_{s,t}} D_0$ is disjoint union. Hence,  by the binomial theorem we have
\begin{align*}
N_{s,t}^{\sgn}(z)=&
\begin{multlined}[t]
\sum_{\sigma\in W_{s,t}} \int_{D_\sigma} \sum_{\substack{\eta_i\in\{\pm 1\}\\ 1\le i \le r+s}}\sum_{q_1+q_2=n}\frac{1}{q_1!q_2!m!}(P_1^\sigma(\bm{\lambda},\bm{\lambda'})L_1(t_1,t_2)+\cdots+P_{s+t}^\sigma(\bm{\lambda},\bm{\lambda'})L_1(t_{s+t},z))^{q_1}\\
\times(P_1^\sigma(\bm{\mu},\bm{\mu'})L_{-1}(t_1,t_2)+\cdots+P_{s+t}^\sigma(\bm{\mu},\bm{\mu'})L_{-1}(t_{s+t},z))^{q_2}\\
\times(P_1^\sigma(\bm{\mu},\bm{\mu'})L_0(t_1,t_2)+\cdots+P_{s+t}^\sigma(\bm{\mu},\bm{\mu'})L_0(t_{s+t},z))^m\\
\times\frac{dt_1}{\eta_1-t_1}\cdots \frac{dt_{s+t}}{\eta_{s+t}-t_{s+t}}\\
\end{multlined}
\\
=&\sum_{\sigma\in W_{s,t}}\sum_{\substack{\bm{i}\in Z_{s+t,q_1}\\ \bm{j}\in Z_{s+t,q_2}\\ \bm{k}\in Z_{s+t,q_3}}}
\left(\prod_{l=1}^{s+t} \frac{
P_l^{\sigma}(\bm{\lambda},\bm{\lambda'})^{i_l+j_l}P_l^{\sigma}(\bm{\mu},\bm{\mu'})^{k_l}}{i_l!j_l!k_l!}\right)\\
&\sum_{q_1+q_2=n}\sum_{\substack{\eta_i\in\{\pm 1\}\\ 1\le i \le r+s}}\int_{D_\sigma}L_1(t_1,t_2)^{i_1}\cdots L_1(t_{s+t},z)^{i_{s+t}}L_{-1}(t_1,t_2)^{j_1}\cdots L_{-1}(t_{s+t},z)^{j_{s+t}}\\
&L_0(t_1,t_2)^{k_1}\cdots L_0(t_{s+t},z)^{k_{s+t}}\frac{dt_1}{\eta_1-t_1}\cdots \frac{dt_{s+t}}{\eta_{s+t}-t_{s+t}}.\\
\intertext{If we put $r_l=i_l+j_l$ for $1\le l \le r+s$,  we have}
=&\sum_{\sigma\in W_{s,t}}\sum_{\substack{\bm{r}\in Z_{s+t,n}\\ \bm{k}\in Z_{s+t,m}}}
\left(\prod_{l=1}^{s+t} \frac{
P_l^{\sigma}(\bm{\lambda},\bm{\lambda'})^{i_l+j_l}P_l^{\sigma}(\bm{\mu},\bm{\mu'})^{k_l}}{i_l!j_l!k_l!}\right)\\
&\sum_{q_1+q_2=n}\sum_{\substack{\eta_i\in\{\pm 1\}\\ 1\le i \le r+s}}\sum_{\substack{i_l+j_l=r_l\\ 1\le l\le r+s}}\int_{D_\sigma}L_1(t_1,t_2)^{i_1}\cdots L_1(t_{s+t},z)^{i_{s+t}}L_{-1}(t_1,t_2)^{j_1}\cdots L_{-1}(t_{s+t},z)^{j_{s+t}}\\
&L_0(t_1,t_2)^{k_1}\cdots L_0(t_{s+t},z)^{k_{s+t}}\frac{dt_1}{\eta_1-t_1}\cdots \frac{dt_{s+t}}{\eta_{s+t}-t_{s+t}}.\\
\intertext{By using Lemma \ref{lem1}, we have}
=&\sum_{\sigma\in W_{s,t}}\sum_{\substack{\bm{r}\in Z_{s+t,n}\\ \bm{k}\in Z_{s+t,m}}}
\left(\prod_{l=1}^{s+t} P_l^{\sigma}(\bm{\lambda},\bm{\lambda'})^{i_l+j_l}P_l^{\sigma}(\bm{\mu},\bm{\mu'})^{k_l}\right)\\
&\sum_{(\bm{\alpha}_1,\ldots,\bm{\alpha}_{s+t})\in S_{s+t}}\Li_p(\bm{\alpha}_1,\ldots,\bm{\alpha}_{s+t};z)
\end{align*}
By operating $\mathfrak{L_p}$ to above formula for all primes $p$, we get
\begin{align}
\sum_{\sigma\in W_{s,t}}\sum_{\substack{\bm{r}\in Z_{s+t,n}\\ \bm{k}\in Z_{s+t,m}}}
\left(\prod_{l=1}^{s+t} P_l^{\sigma}(\bm{\lambda},\bm{\lambda'})^{i_l+j_l}P_l^{\sigma}(\bm{\mu},\bm{\mu'})^{k_l}\right)\label{Nstright}\\
&\times \sum_{(\bm{\alpha}_1,\ldots,\bm{\alpha}_{s+t})\in S_{s+t}}\zf(\bm{\alpha}_1,\ldots,\bm{\alpha}_{s+t})\notag
\end{align}
By (\ref{Nstleft}) and (\ref{Nstright}), we obtain the claim.
\end{proof}
\begin{rmk}
Theorem \ref{ann} recovers main theorem in the case $s=t=1$.
\end{rmk}
\appendix
\section{The proof of Lemma \ref{lem1}}\label{Ap}

In this appendix, we give a proof of Lemma \ref{lem1} which is required to prove our main theorem and its generalization (Theorem \ref{ann}).
We prove the lemma by induction on $s$.
Firstly we prove the case for $s=1$.
We define
\begin{align}
I_D^{+,i_1}(z):=&\frac{1}{i_1!j_1!k_1!}\int_D\prod_{l=1}^{i_1}\frac{du_l}{1-u_l}\prod_{m=1}^{j_1}\frac{-dv_m}{1+v_m}\prod_{n=1}^{k_1}\frac{dw_n}{w_n}\frac{dt}{1-t},\label{ap1}\\
I_D^{-,i_1}(z)=&\frac{1}{i_1!j_1!k_1!}\int_D\prod_{l=1}^{i_1}\frac{du_l}{1-u_l}\prod_{m=1}^{j_1}\frac{-dv_m}{1+v_m}\prod_{n=1}^{k_1}\frac{dw_n}{w_n}\frac{-dt}{1+t}\notag
\end{align}
for a suitable domain $D$. We treat an empty product as $1$. We consider the following domains for $0\le i_1\le q_1=i_1+j_1$
\[
D_{i_1}:=\left\{(t_1,u_1,\ldots,u_{i_1},v_1,\ldots,v_{j_1},w_1,\ldots,w_{k_1})\in [0,z)^{i_1+j_1+k_1+1}\middle|
\begin{gathered}
t_1\le u_l,\;\;1\le l \le i_1\\
t_1\le v_m,\;\; 1\le m\le j_1\\
t_1\le w_n,\;\; 1\le n\le k_1
\end{gathered}
\right\}
\]
and the following integration:
\[
I^{\pm}(z):=\sum_{i_1+j_1=q_1}(I_{D_{i_1}}^{+,i_1}(z)+I_{D_{i_1}}^{-,i_1}(z)).
\]
It is clear that
\begin{align*}
&\int_{t_1}^z \frac{dt}{1-t}=\log \frac{1-t_1}{1-z}=L_1(t_1,z),\\
&\int_{t_1}^z \frac{-dt}{1+t}=\log \frac{1+t_1}{1+z}=L_{-1}(t_1,z),\\
&\int_{t_1}^z \frac{dt}{t}=\log \frac{z}{t_1}=L_0(t_1,z).
\end{align*}
The above integrals $I_{D_{i_1}}^{+,i_1}(z)$ and $I_{D_{i_1}}^{-,i_1}(z)$ are written by the following iterated integrals:
\begin{align}
I_{D_{i_1}}^{+,i_1}(z)=\frac{1}{i_1!j_1!k_1!}\int_{0<t_1<z}L_1(t_1,z)^{i_1}L_{-1}(t_1,z)^{j_1}L_0(t_1,z)^{k_1}\frac{dt_1}{1-t_1},\label{1+left}\\
I_{D_{i_1}}^{-,i_1}(z)=\frac{1}{i_1!j_1!k_1!}\int_{0<t_1<z}L_1(t_1,z)^{i_1}L_{-1}(t_1,z)^{j_1}L_0(t_1,z)^{k_1}\frac{-dt_1}{1+t_1}.\label{1-left}
\end{align}
By (\ref{1+left}) and (\ref{1-left}), we obtain
\begin{align}
I^{\pm}(z)=\sum_{i_1+j_1=q_1}\frac{1}{i_1!j_1!k_1!}\int_{0<t_1<z}L_1(t_1,z)^{i_1}L_{-1}(t_1,z)^{j_1}L_0(t_1,z)^{k_1}\left(\frac{dt_1}{1-t_1}+\frac{-dt_1}{1+t_1}\right).\label{1left}
\end{align} 
We put $(x_1,\ldots,x_{q_1+k_1})=(u_1,\ldots,u_{i_1},v_1,\ldots,v_{j_1},w_1,\ldots,w_{k_1})$ and define
\[
D_{\sigma}:=\{(t_1,x_1,\ldots,x_{q_1+k_1})\in D_{i_1} \mid t_1<x_{\sigma(1)}<\cdots<x_{\sigma(q_1+k_1)}\}
\]
for $\sigma\in \mathfrak{S}_{q_1+k_1}$.
Since the closure $\overline{D_{i_1}}$ is given by $\displaystyle \bigcup_{\sigma\in \mathfrak{S}_{q_1+k_1}}\overline{D_{\sigma}}\;$ and $\displaystyle \bigcup_{\sigma\in \mathfrak{S}_{q_1+k_1}}D_{\sigma}$ is disjoint, we have
\begin{align}
I^{\pm}(z)&=\sum_{i_1+j_1=q_1}(I_{D_{i_1}}^+(z)+I_{D_{i_1}}^-(z))=\sum_{i_1+j_1=q_1}\sum_{\sigma\in \mathfrak{S}_{q_1+k_1}}(I_{D_\sigma}^+(z)+I_{D_{\sigma}}^-(z)).\label{form} 
\end{align}
To consider the relationship between $S_{q_1,k_1}^{\sgn}$ and above integrals, we make the following definition:
\begin{align*}
S_{q_1,k_1}^+&=\left\{\bm{\alpha}=(\alpha_0,\ldots,\alpha_{q_1})\in S_{q_1,k_1}^{\sgn}\middle| \prod_{i=0}^{q_1+1}\sgn(\alpha_{i})=1\right\},\\
S_{q_1,k_1}^-&=\left\{\bm{\alpha}=(\alpha_0,\ldots,\alpha_{q_1})\in S_{q_1,k_1}^{\sgn}\middle| \prod_{i=0}^{q_1+1}\sgn(\alpha_{i})=-1\right\},\\
T_{i_1,k_1}^+&=\left\{\bm{\alpha}=(\alpha_0,\ldots,\alpha_{q_1})\in S_{q_1,k_1}^+\middle| \#\left\{p \middle| \prod_{d=0}^{p} \sgn(\alpha_{q_1-d}) =-1\right\}=j_1 \right\},\\
T_{i_1,k_1}^-&=\left\{\bm{\alpha}=(\alpha_0,\ldots,\alpha_{q_1})\in S_{q_1,k_1}^+\middle| \#\left\{p \middle| \prod_{d=0}^{p} \sgn(\alpha_{q_1-d}) =1\right\}=i_1 \right\}.
\end{align*}
We note that $S_{q_1,k_1}^{\sgn}$ is the disjoint union of $S_{q_1,k_1}^+$ and $S_{q_1,k_1}^-$ and that
 $S_{q_1,k_1}^+$ (resp. $S_{q_1,k_1}^-$) is the disjoint union of $T_{i_1,k_1}^+$ (resp. $T_{i_1,k_1}^-$) with $0\le i_1\le q_1$.
For any $i_1$, and $\sigma \in\mathfrak{S}_{i_1+k_1}$, there exists uniquely $\bm{\alpha}\in T_{i_1,k_1}^{+}$ (resp. $T_{i_1,k_1}^{-}$) such that
\begin{align}
I_{D_{\sigma}}^{+,i_1}(z)=\frac{1}{i_1!j_1!k_1!}\Li(\bm{\alpha};z),
\;\;\;
(\text{resp. }I_{D_{\sigma}}^{-,i_1}(z)=\frac{1}{i_1!j_1!k_1!}\Li(\bm{\alpha};z)).
\label{ap2} 
\end{align}
by (\ref{ap1}). 

Conversely, we note that for any $\bm{\alpha}\in T_{i_1,k_1}^{+}$ (resp. $T_{i_1,k_1}^-$), we have $\sigma\in\mathfrak{S}_{q_1+k_1}$ with $i_1!j_1!k_1!$ choices which satisfies (\ref{ap2}).

Hence by ($\ref{form}$),
we have
\begin{align}
I^{\pm}(z)&= \sum_{i_1+j_1=q_1}\frac{1}{i_1!j_1!k_1!}\sum_{\eta_1\in\{\pm 1\}}\sum_{\sigma\in \mathfrak{S}_{q_1+k_1}}\int_{D_\sigma}\prod_{l=1}^{i_1}\frac{du_l}{1-u_l}\prod_{m=1}^{j_1}\frac{-dv_m}{1+v_m}\prod_{n=1}^{k_1}\frac{dw_n}{w_n}\frac{dt}{\eta_1-t}\label{1right}\\
&= \sum_{i_1+j_1=q_1}\frac{1}{i_1!j_1!k_1!}i_1!j_1!k_1!\left(\sum_{\bm{\alpha}\in T_{i_1,k_1}^+}\Li(\bm{\alpha};z)+\sum_{\bm{\alpha}\in T_{i_1,k_1}^-}\Li(\bm{\alpha};z)\right)\notag\\
&=\sum_{\bm{\alpha}\in S_{q_1,k_1}^{\sgn}} \Li(\bm{\alpha};z).\notag
\end{align}
By (\ref{1left}) and (\ref{1right}), we conclude the case for $s=1$.

Secondly, we prove the case for $s>1$.

We define
\begin{align*}
\mathcal{I}:=
\begin{multlined}[t]
\sum_{\substack{\eta_r\in\{\pm1\}\\ 1\le r \le s}}
\sum_{\substack{i_l+j_l=q_l\\ 1\le l\le s}}
C_s^{-1}
\int_{D_{i_1,\ldots,i_s}}
\prod_{l=1}^{i_1+\cdots+i_s}\left(\frac{du_l}{1-u_l}\right)
\prod_{m=1}^{j_1+\cdots+j_s}\left(\frac{-dv_l}{1+v_l}\right)
\prod_{n=1}^{k_1+\cdots+k_s}\left(\frac{dw_l}{w_l}\right)
\\ \times\frac{dt_1}{\eta_1-t_1}\cdots \frac{dt_s}{\eta_s-t_s}
\end{multlined}
\end{align*}
with
\[
D_{i_1,\ldots,i_s}:=\left\{
\begin{gathered}
(t_1,\ldots,t_s)\in[0,z)^{s}\\
(u_1,\ldots,u_{i_1+\cdots+i_s})\in[0,z)^{i_1+\cdots+i_s}\\
(v_1,\ldots,v_{j_1+\cdots+j_s})\in[0,z)^{j_1+\cdots+j_s}\\
(w_1,\ldots,w_{k_1+\cdots+k_s})\in[0,z)^{k_1+\cdots+k_s}
\end{gathered}
\middle|
\begin{gathered}
t_1<\cdots<t_s\\
t_k<u_{i_{k-1}+1},\cdots ,u_{i_k}<t_{k+1} \\
t_k<v_{i_{k-1}+1},\cdots ,v_{i_k}<t_{k+1} \\
t_k<w_{i_{k-1}+1},\cdots ,w_{i_k}<t_{k+1}\\
1\le k\le s,\text{ with $t_{s+1}=z$}
\end{gathered}
\right\}
\]
and $C_s:=\prod_{l=1}^{s} i_l!j_l!k_l!$. By the iterated integral, we have
\begin{align*}
\mathcal{I}=&\sum_{\substack{\eta_r\in\{\pm1\}\\ 1\le r \le s}}
\sum_{\substack{i_l+j_l=q_l\\ 1\le l\le s}}\int_{D_{i_s}}
\frac{1}{i_s!j_s!k_s!}
\left(\prod_{l=1}^{i_s}\frac{du'_l}{1-u'_l}\right)
\left(\prod_{m=1}^{j_s}\frac{-dv'_m}{1+v'_m}\right)
\left(\prod_{n=1}^{k_s}\frac{dw'_n}{w'_n}\right)\frac{dt_{s}}{\eta_{s}-t_{s}}\\
&\times C_{s-1}^{-1}\int_{0<t_1<\cdots<t_{s}}L_1^{i_1}(t_1,t_2)\cdots L_1^{i_{s-1}}(t_{s-1},t_s)
L_{-1}^{j_1}(t_1,t_2)\cdots L_{-1}^{j_{s-1}}(t_{s-1},t_s)\\
&\times L_0^{k_1}(t_1,t_2)\cdots L_0^{k_{s-1}}(t_{s-1},t_s)
\times \frac{dt_1}{\eta_1-t_1}\cdots \frac{dt_{s-1}}{\eta_{s-1}-t_{s-1}}
\end{align*}

with
\[
D_{i_s}:=\left\{(t_s,u'_1\ldots,u'_{i_s},v'_1,\ldots,v'_{j_s},w'_1,\ldots,w'_{k_s})\in[0,z)^{q_s+k_s+1}
\middle|
\begin{gathered}
t_s<u'_{1},\cdots ,u'_{i_s}<z \\
t_s<v'_{1},\cdots ,v'_{j_s}<z \\
t_s<w'_{1},\cdots ,w'_{k_s}<z
\end{gathered}
\right\}.
\]
It is reformulated to be
\begin{align*}
\mathcal{I}=&\sum_{\eta_s\in\{\pm1\}}
\sum_{i_s+j_s=q_s}\frac{1}{i_s!j_s!k_s!}\int_{D_{i_s}}
\left(\prod_{l=1}^{i_s}\frac{du'_l}{1-u'_l}\right)
\left(\prod_{m=1}^{j_s}\frac{-dv'_m}{1+v'_m}\right)
\left(\prod_{n=1}^{k_s}\frac{dw'_n}{w'_n}\right)\frac{dt_{s}}{\eta_{s}-t_{s}}\\
&\times 
\begin{multlined}[t]
\sum_{\substack{\eta_r\in\{\pm1\}\\ 1\le r \le s-1}}\sum_{\substack{i_l+j_l=q_l\\ 1\le l\le s-1}}C(s-1)^{-1}\int_{0<t_1<\cdots<t_{s}}L_1^{i_1}(t_1,t_2)\cdots L_1^{i_{s-1}}(t_{s-1},t_s)\\
\times L_{-1}^{j_1}(t_1,t_2)\cdots L_{-1}^{j_{s-1}}(t_{s-1},t_s)
L_0^{k_1}(t_1,t_2)\cdots L_0^{k_{s-1}}(t_{s-1},t_s)
\frac{dt_1}{\eta_1-t_1}\cdots \frac{dt_{s-1}}{\eta_{s-1}-t_{s-1}}.
\end{multlined}\\
\intertext{By our induction assumption, we obtain}
=&\sum_{\eta_s\in\{\pm1\}}
\sum_{i_s+j_s=q_s}\frac{1}{i_s!j_s!k_s!}\int_{D_{i_s}}
\left(\prod_{l=1}^{i_s}\frac{du'_l}{1-u'_l}\right)
\left(\prod_{m=1}^{j_s}\frac{-dv'_m}{1+v'_m}\right)
\left(\prod_{n=1}^{k_s}\frac{dw'_n}{w'_n}\right)\frac{dt_{s}}{\eta_{s}-t_{s}}\\
&\times 
\sum_{(\bm{\alpha}_1,\ldots,\bm{\alpha}_{s-1})\in S_{s-1}}
\Li(\bm{\alpha}_1,\ldots,\bm{\alpha}_{s-1};t_s).
\intertext{By recursive differential for $\Li(\bm{\alpha};z)$,  we obtain}
&=\sum_{(\bm{\alpha}_1,\ldots,\bm{\alpha}_s)\in S_s}\Li(\bm{\alpha}_1,\ldots,\bm{\alpha}_s;z).
\end{align*}
\begin{flushright}
$\square$
\end{flushright}
\bigskip
\begin{center}
Acknowledgements
\end{center} 

The author is deeply grateful to Professor H. Furusho; without his profound instruction, continuous encouragements, this paper would never be accomplished. He would like to thank S. Kadota and K. Kamano for  giving him comments on the earlier version of the paper.

  

\begin{thebibliography}{99}
\bibitem[B]{B2} D. J. Broadhurst, \textit{Conjectured enumeration of irreducible multiple zeta values, from knots and Feynman diagrams}, preprint, arXiv:hep-th/9612012. 
\bibitem[EW]{EW} M. Eie, C.-H. Wei, \textit{A short proof for the sum formula and its generalization} , Arch. Math. 91, 330-338 (2008)
\bibitem[IKT]{IKT} K. Imatomi, M.Kaneko, E. Takeda, \textit{Multi-poly-Bernoulli numbers and finite multiple zeta values}, J. Integer Seq. 17, 4, Article 14.4.5, 12 pp, (2014)
\bibitem[Kad]{Kad} S. Kadota, \textit{Certain weighted sum formulas for multiple zeta values with some parameters}, Comment. Math. Univ. St. Pauli, 66, no. 1-2, 1-13, (2017)
\bibitem[Kam]{Ka} K. Kamano, \textit{Weighted sum formulas for finite multiple zeta values}, J. Number Theory 192, 168-180 (2018)
\bibitem[LM]{LM}T.-Q.-T. Le, J. Murakami, \textit{Kontsevich's integral for the Homfly polynomial and relations between values of multiple zeta functions}, Topology Appl, 62, 2, 193--206, (1995)
\bibitem[SW]{SW} S. Saito, N. Wakabayashi, \textit{Sum formula for finite multiple zeta values}, J. Math. Soc. Japan Vol. 67, No. 3, 1069--1076, (2015)
\bibitem[T]{T} T. Terasoma, \textit{Mixed Tate motives and multiple zeta values}, Invent. Math. 149, 2, 339--369, (2002)
\bibitem[Z]{zhao} J. Zhao, \textit{Finite multiple zeta values and finite euler sums}, arXiv:1507.04917
\end{thebibliography}
\end{document}